\documentclass[11pt,a4paper]{amsart}

\usepackage{amsmath, amsthm}              
\usepackage{pgfplots,pdflscape,comment}
\usepackage{amssymb,bm}
\usepackage{amsthm, wasysym,url}
\usepackage{enumitem}

\oddsidemargin=0in
\evensidemargin=0in
\textwidth=6.5in

\newtheorem{thm}{Theorem}[section]

\newtheorem{lemma}[thm]{Lemma}
\newtheorem{cor}[thm]{Corollary}

\theoremstyle{remark}
\newtheorem{remark}[thm]{{\bf Remark}}
\newtheorem{example}[thm]{{\bf Example}}

\usepackage{amsmath, amssymb, amsthm,color,tikz, mathdots}
\usetikzlibrary{arrows,decorations.pathmorphing,backgrounds,fit}
\parindent=0in
\parskip 0.3cm

\newcommand{\hatt}[1]{%
\hat{#1}}

\newcommand{\doublehat}[1]{%
\bar{#1}}

\newenvironment{myproof}[1] {{\em Proof of {#1}. }}{\hfill$\square$}

\title{The inheritance of local bifurcations in mass action networks}

\author{Murad Banaji}
\address{University of Oxford}
\email{murad.banaji@maths.ox.ac.uk}

\author{Bal\'azs Boros}
\address{University of Vienna}
\email{balazs.boros@univie.ac.at}
\thanks{BB's work was supported by the Austrian Science Fund (FWF), project P32532.}

\author{Josef Hofbauer}
\address{University of Vienna}
\email{josef.hofbauer@univie.ac.at}

\begin{document}

\begin{abstract}

  We consider local bifurcations of equilibria in dynamical systems arising from chemical reaction networks with mass action kinetics. In particular, given any mass action network admitting a local bifurcation of equilibria, assuming only a general transversality condition, we list some enlargements of the network which preserve its capacity for the bifurcation. These results allow us to identify bifurcations in reaction networks from examination of their subnetworks, extending and complementing previous results on the inheritance of nontrivial dynamical behaviours amongst mass action networks. A number of examples are presented to illustrate applicability of the results.

  \smallskip
  \noindent \textbf{Keywords.} chemical reaction networks, mass action, bifurcations, transversality, inheritance
\end{abstract}

\maketitle

\section{Introduction}

A chemical reaction network (CRN) with mass action kinetics defines a parameterised family of ordinary differential equations (ODEs). There has been considerable recent interest in when such families, and indeed systems arising from reaction networks with other choices of kinetics, admit various nontrivial behaviours such as multistationarity and oscillation (\cite{craciun,banajicraciun2,domijan,joshishiu,banajipantea,muellerAll,banajiCRNosci,obatake,conradipantea2019,dickenstein2019,boroshofbauerplanar,boros:hofbauer:2022b,banajiborosnonlinearity}, for example). We will refer to a CRN with mass action kinetics, and also to the family of ODEs it generates, as a {\em mass action system}. In this paper we focus on mass action systems, although some of the results are easily extended to ODEs arising from reaction networks with other choices of kinetics. 

As bifurcations serve as organising centres for nontrivial dynamical behaviours, it is natural when seeking interesting dynamics in mass action systems to try and characterise those which admit particular bifurcations. Amongst small systems we may hope, with the help of computer algebra, to list those which admit some particular bifurcation, as done in \cite{banajiborosnonlinearity} in the case of Andronov--Hopf and Bautin bifurcations in small bimolecular systems. However, in larger systems, this direct approach rapidly becomes computationally difficult or impossible. 

One approach to finding interesting behaviours in mass action systems of arbitrary size involves so-called {\em inheritance} results (\cite{joshishiu,feliuwiufInterface2013,banajipanteaMPNE,banajiCRNosci,Cappelletti2020,banajiboroshofbauer,banajisplit}, for example). These tell us which enlargements of reaction systems preserve certain dynamical behaviours. The appeal of these results is that we can use them to infer interesting dynamics in a network by merely observing the presence of a certain subnetwork. Moreover, inheritance results give us information about regions in parameter space where the dynamical behaviour of interest must occur.

In this paper we write down an inheritance result for local bifurcations of equilibria in mass action systems. In particular, we begin with a mass action system, say $\mathcal{R}$, and suppose that, as we vary its rate constants, a positive equilibrium of $\mathcal{R}$ undergoes a local bifurcation, denoted by $B$. We now enlarge $\mathcal{R}$ to create a new CRN, say $\mathcal{R}'$, in natural ways which involve adding species and/or reactions to $\mathcal{R}$. Our goal is to list some conditions on the relationship between $\mathcal{R}$ and $\mathcal{R}'$, and on the bifurcation $B$, which suffice to guarantee that $\mathcal{R}'$ admits the same bifurcation $B$. In fact, we will show that if the enlargement belongs to one of six classes previously described \cite{banajisplit}, and if $B$ is unfolded transversely by the rate constants of $\mathcal{R}$ in the natural sense, then $\mathcal{R}'$ will admit bifurcation $B$, unfolded transversely by its rate constants. In this case, we say that the bifurcation $B$ occurring in $\mathcal{R}'$ is {\em inherited} from $\mathcal{R}$. These claims are the content of the main theorem, namely, Theorem~\ref{thmbifinherit}.

The main tools that we use to prove these claims are regular perturbation theory, and geometric singular perturbation theory (GSPT) in the form developed by Fenichel \cite{Fenichel79}. The challenge lies in setting up the problems so that we can apply the results of perturbation theory. Before presenting the details, we make a few general remarks. 

We note, first, that the state space of any model of a CRN is, in general, foliated by positively invariant polyhedra, termed {\em stoichiometric classes}. The intersection of any such class with the positive orthant is a {\em positive stoichiometric class}, and the ``rank'' of the network is the dimension of any positive stoichiometric class. Our assumption about $\mathcal{R}$ is that if we restrict attention to one of its positive stoichiometric classes and vary its rate constants, the local bifurcation $B$ is observed on this class. The conclusion we hope to arrive at about $\mathcal{R}'$ is formally identical. However, since $\mathcal{R}'$ may have greater rank than $\mathcal{R}$, some care is needed in interpreting what we mean by the claim that the ``same'' bifurcation occurs in $\mathcal{R}'$. In this case, we will show that, for some range of rate constants and on some stoichiometric class $\mathcal{S}'$ of $\mathcal{R}'$, there exists a locally invariant manifold $\mathcal{E} \subseteq \mathcal{S}'$ of dimension equal to the rank of $\mathcal{R}$; and on $\mathcal{E}$ we observe the bifurcation $B$. Moreover, directions in $\mathcal{S}'$ transverse to $\mathcal{E}$ are associated with exponentially attracting dynamics, merely adding to the dimension of the stable manifold of the bifurcating equilibrium and other nearby objects on the (parameter-dependent) centre manifold. 

Secondly, we note that in order to arrive at the conclusions we will not need to specify details of the bifurcation $B$, beyond the assumption that the bifurcation conditions can be phrased, locally, as finitely many smooth conditions on jets of the vector field at the bifurcating equilibrium (i.e., the vector field and its derivatives up to some finite order), and a general transversality hypothesis. Indeed, we do not need to assume that all the nondegeneracy conditions usually associated with any given bifurcation hold; however if they do, then they continue to  hold also for the enlarged network. As a concrete example, let us suppose that $\mathcal{R}'$ inherits bifurcations from $\mathcal{R}$: if an Andronov--Hopf bifurcation occurs in $\mathcal{R}$, and is unfolded transversely by the rate constants, then an Andronov--Hopf bifurcation must occur in $\mathcal{R}'$, again unfolded transversely by its rate constants; if, in fact, the Andronov--Hopf bifurcation in $\mathcal{R}$ is {\em supercritical}, then $\mathcal{R}'$ too admits a supercritical Andronov--Hopf bifurcation. These points will be illustrated in the examples in Section~\ref{secex}.

A third remark is that although here we restrict attention to local bifurcations of equilibria, the general framework we construct is considerably more flexible, and the results can be extended to a wider class of bifurcations, not necessarily local. 

In Section~\ref{secbif}, bifurcations and their persistence under regular and singular perturbations are discussed in a general setting. In Section~\ref{secmain}, the main theorem on bifurcations in CRNs is stated and proved. In Section~\ref{secex} we provide some examples, which demonstrate potential use of the results in this paper. We make a few concluding remarks in Section~\ref{secconc}.

\section{Local bifurcations of equilibria}
\label{secbif}

Before turning to mass action systems, we discuss local bifurcations of equilibria in a general context, although with some assumptions particularly suited to the applications to follow. We follow \cite{arnold2012geometrical,wiggins}, and the reader is referred to these references and \cite{HirschDifferentialTopology} for some of the details on bifurcation theory, jets of functions, and transversality. 

Throughout this section we will be concerned with the differential equation 
\begin{equation}
\label{eqgeneral}
\dot x = f(x, \kappa)\,,
\end{equation}
where $x$ varies in some open set $V_x \subseteq \mathbb{R}^n$, while the parameter $\kappa$ varies in an open set $V_\kappa \subseteq \mathbb{R}^m$. The parameter-dependent vector field, $f\colon V_x \times V_\kappa \to \mathbb{R}^n$, is assumed to be $C^k$ for some prescribed $k$, where the value of $k$ needed will depend on the bifurcation in question. We remark that although mass action systems define polynomial vector fields, the systems we ultimately examine may be derived from these via GSPT, and consequently may have only finite smoothness.

We write $D_xf(x,\kappa)$ for the derivative of $f$ with respect to $x$ evaluated at $(x,\kappa)$, and $D^{(j)}_xf(x,\kappa)$ for the $j$th derivative of $f$ with respect to $x$ evaluated at $(x,\kappa)$. We may identify $D_x^{(j)}$ with a symmetric $j$-linear map from $\mathbb{R}^n \times \cdots \times \mathbb{R}^n$ ($j$ times) to $\mathbb{R}^n$. For any fixed $k \geq 0$ and $\kappa \in V_\kappa$, the $k$-jet of $f(\cdot, \kappa)$ at $x \in V_x$ is defined as $J_x^kf(x, \kappa):=(x, f(x,\kappa),D_xf(x,\kappa), \ldots, D^{(k)}_xf(x,\kappa))$.  

We denote the set of all $k$-jets of all $C^k$ functions $V_x \to \mathbb{R}^n$ by $J^k(V_x, \mathbb{R}^n)$. Clearly $J^k(V_x, \mathbb{R}^n)$ is an open subset of $J^k(\mathbb{R}^n, \mathbb{R}^n)$ which is naturally identified with $\mathbb{R}^n \times \mathbb{R}^q$ for some $q$. We denote by $J^k_0(\mathbb{R}^n, \mathbb{R}^n)$ the subset of $J^k(\mathbb{R}^n, \mathbb{R}^n)$ where $f=0$, which is naturally identified with $\mathbb{R}^n \times \mathbb{R}^{q-n}$. For fixed $f\colon V_x \times V_\kappa \to \mathbb{R}^n$ and fixed $\kappa \in V_\kappa$, the map $x \mapsto J_x^{k}f(x, \kappa)$ which embeds a copy of $V_x$ into $J^k(\mathbb{R}^n, \mathbb{R}^n)$ is the ``$k$-jet extension'' of $f(\cdot, \kappa)$. We will denote the map which takes $(x, \kappa) \in V_x \times V_\kappa$ to $J_x^{k}f(x, \kappa)$ as $f^*$, namely $f^*(x, \kappa) = J_x^{k}f(x, \kappa)$. The image of $f^*$ is thus the union of the images of $f^*(\cdot, \kappa)$, for $\kappa \in V_\kappa$. Note that if $f$ is $C^{k+k'}$ ($k' \geq 0$), then $f^*$ is $C^{k'}$. 

We now consider some local bifurcation of equilibria, denoted $B$, of $C^k$ vector fields on any open subset of $\mathbb{R}^n$. We assume that $B$ can be defined as follows: there exists a smooth, embedded submanifold $\mathcal{B}^* \subseteq J^k_0(\mathbb{R}^n, \mathbb{R}^n)$, of codimension $c \geq 1$ in $J^k_0(\mathbb{R}^n, \mathbb{R}^n)$, such that given any $C^k$ vector field $F$ on some open subset of $\mathbb{R}^n$, $x$ is a {\em bifurcation point} of $F$ corresponding to the bifurcation $B$ if and only if $F^*(x) \in \mathcal{B}^*$ (where $F^*$ is the $k$-jet extension of $F$). This definition merely encodes the fact that $x$ is a nonhyperbolic equilibrium of $F$ whose nature is determined by smooth conditions on the $k$-jet of $F$ at $x$. Given a parameterised family such as in (\ref{eqgeneral}), we may also refer to  $(x, \kappa) \in V_x \times V_\kappa$ as a bifurcation point corresponding to $B$ if $f^*(x, \kappa) \in \mathcal{B}^*$. We define the codimension of $B$ to be the codimension of $\mathcal{B}^*$ in $J^k_0(\mathbb{R}^n, \mathbb{R}^n)$, namely, $c$.

We also assume that the occurrence of $B$ is independent of any choice of coordinates on the domain of the vector field. In other words, a sufficiently smooth change of coordinates cannot create or destroy a bifurcation point. More precisely, given vector fields $F,\hat{F}$ on open sets $U, V$ in $\mathbb{R}^n$, and a $C^{k+1}$ diffeomorphism $g$ between $U$ and $V$ such that $\hat{F}(y) = (Dg)(g^{-1}(y))F(g^{-1}(y))$, then $F^*(x) \in \mathcal{B}^*$ if and only if $\hat{F}^*(g(x)) \in \mathcal{B}^*$. In particular, choosing $g$ to be a translation, we see that $(x, d_1, d_2, \ldots) \in \mathcal{B}^*$ implies $(y, d_1, d_2, \ldots) \in \mathcal{B}^*$ for any $x,y \in \mathbb{R}^n$.

\begin{remark}[Characterising bifurcations via $k$-jets]
We do not claim that every local bifurcation is best characterised via conditions on the $k$-jet of the vector field at the bifurcation point. However, it is such infinitesimal conditions which are, in practice, most useful in confirming the occurrence of bifurcations in specific applications.
\end{remark}

It follows from the definition that given a bifurcation point $(\tilde{x}, \tilde{\kappa}) \in V_x \times V_\kappa$ of (\ref{eqgeneral}), there exists an open neighbourhood $W \subseteq V_x \times V_\kappa$ of $(\tilde{x}, \tilde{\kappa})$ and a function $h \colon W \to \mathbb{R}^{n} \times \mathbb{R}^c$ such that $(x, \kappa) \in W$ is a bifurcation point of (\ref{eqgeneral}) if and only if $h(x, \kappa)=0$. It is convenient to break down the vanishing of $h$ into two conditions, namely, the vanishing of the vector field, and the bifurcation conditions:

\begin{enumerate}
\item[(B1)] {\bf Equilibrium condition}. The vector field vanishes at $(x, \kappa)$, i.e., $f(x,\kappa)=0$. Equivalently, $f^*(x, \kappa) \in J^k_0(\mathbb{R}^n, \mathbb{R}^n)$.
\item[(B2)] {\bf Bifurcation conditions}. As $\mathcal{B}^*$ is a smooth embedded submanifold of $J^k_0(\mathbb{R}^n, \mathbb{R}^n)$ with codimension $c$, by standard arguments (e.g., Proposition~5.16 in \cite{LeeSmoothManifolds}) there exists a neighbourhood $W^*$ of $f^*(\tilde{x},\tilde{\kappa})$ and a smooth function $\tilde{G} \colon W^* \cap J^k_0(\mathbb{R}^n, \mathbb{R}^n) \to \mathbb{R}^c$ such that $\tilde{G}(y) = 0$ if and only if $y \in \mathcal{B}^*$ and, moreover, $\tilde{G}$ is regular at each point of $W^* \cap \mathcal{B}^*$. Shrinking $W^*$ if necessary, we may clearly extend $\tilde{G}$ to a smooth function $G\colon W^* \to \mathbb{R}^c$. Setting $W := (f^*)^{-1}(W^*) \cap (V_x \times V_\kappa)$, we may define the bifurcation function $g\colon W \to \mathbb{R}^c$ by
\[
g(x,\kappa) := G(x, f(x,\kappa),D_xf(x,\kappa), \ldots, D^{(k)}_xf(x,\kappa))\,.
\]
\end{enumerate}
Thus, setting $\mathcal{B} := (f^*)^{-1}(\mathcal{B}^*) \cap W$, the bifurcation $B$ occurs at $(x, \kappa) \in W$ if and only if $(x, \kappa) \in \mathcal{B}$. In other words, there exists a neighbourhood $W$ of $(\tilde{x}, \tilde{\kappa})$, such that the bifurcation $B$ is defined in $W$ via the vanishing of the function $h := (f,g)\colon W \to \mathbb{R}^n \times \mathbb{R}^c$.

\begin{remark}[Writing down bifurcation conditions explicitly]
It can be nontrivial to write down bifurcation functions such as $G$ and $g$ above explicitly, or compute the zero locus of these functions for a given family of systems. As an example, checking the vanishing of the first Lyapunov coefficient (also sometimes termed the first ``focal value'') at an equilibrium with a simple pair of nonzero imaginary eigenvalues, in order to confirm a Bautin bifurcation \cite{kuznetsov:2023}, is computationally challenging even in $\mathbb{R}^3$ -- see, for example, the computations carried out in \cite{banajiborosnonlinearity}. As we will see, the results in this paper can reduce the need for such computations when we are analysing CRNs.
\end{remark}

\begin{remark}[Inequalities in bifurcation conditions]
  Apart from the defining equalities, bifurcation conditions generally include additional strict inequalities involving smooth functions of the partial derivatives of the vector field at the nonhyperbolic equilibrium. We do not need to separately consider these: we may assume that the definition of $\mathcal{B}^*$ encompasses all such conditions, assumed to correspond to open subsets of $J^k(\mathbb{R}^n, \mathbb{R}^n)$. Note, however, that the value of $k$ required in the definition of $\mathcal{B}^*$ will in general depend on the additional conditions. For example, in a system on $\mathbb{R}^2$ undergoing an Andronov--Hopf bifurcation we want the trace of the Jacobian matrix at the nonhyperbolic equilibrium to be zero (the defining equality), and the determinant to be positive (an additional inequality). We may also require the nonvanishing of the first Lyapunov coefficient. Any such strict inequalities define an open submanifold of the bifurcation set defined via equalities, and so once we have assumed the inequalities to hold at some bifurcation point, the same is true nearby. In this case, if we do not require nonvanishing of the first Lyapunov coefficient, then we may take $k\geq 1$, whereas otherwise we require $k\geq 3$.
\end{remark}

It is conceptually helpful, as discussed in Chapter~2 of \cite{kuznetsov:2023}, to separate bifurcation and nondegeneracy conditions capturing the nature of a nonhyperbolic equilibrium in a single dynamical system, from transversality conditions capturing the behaviour of a family of systems. It is transversality which is the property robust to perturbations, key to the results to follow. 

Given $(\tilde{x}, \tilde{\kappa}) \in \mathcal{B}$, we say that the bifurcation $B$ at $(\tilde{x}, \tilde{\kappa})$ is {\em unfolded transversely by the parameters $\kappa$} if the following condition holds.
\begin{enumerate}
\item[(B3)] {\bf Transversality}. The map $h$ is regular at $(\tilde{x},\tilde{\kappa})$ and $D_xh(\tilde{x}, \tilde{\kappa})$ has rank $n$. By some basic linear algebra, equivalently, we may phrase this condition as follows: we can choose $c$ parameters, w.l.o.g. $\kappa_1, \ldots, \kappa_{c}$, such that $\underline{h}(x, \kappa_1, \ldots, \kappa_c) := h(x, \kappa_1, \ldots, \kappa_c, \tilde{\kappa}_{c+1}, \ldots, \tilde{\kappa}_m)$ is regular at $(\tilde{x},\tilde{\kappa}_1, \ldots, \tilde{\kappa}_c)$. Note that in order for this condition to make sense we assume that $f$ is at least $C^{k+1}$, and that $m \geq c$, i.e., that the total number of free parameters is at least equal to the codimension of the bifurcation. 
\end{enumerate}

\begin{remark}[Geometrical consequences of the bifurcation conditions]
Conditions B1--B3, although general, capture the intuitive notion of what it means for a bifurcation to have codimension $c$ and to be unfolded transversely by the parameters. Let $(\tilde{x},\tilde{\kappa})$ be a bifurcation point of (\ref{eqgeneral}), and suppose that parameters $\kappa_1, \ldots, \kappa_c$ unfold the bifurcation. Set $\underline{W}$ to denote $\{(x,\kappa) \in W\,:\,\kappa_{c+1} = \tilde{\kappa}_{c+1}, \ldots, \kappa_{m} = \tilde{\kappa}_{m}\}$ , and let $\underline{f}^* = f^*|_{\underline{W}}$. The assumptions imply that, shrinking $W$ if necessary, 
\begin{enumerate}
\item[(i)] $\mathcal{B}$ is an $(m-c)$-dimensional submanifold of $V_x \times V_{\kappa}$. 
\item[(ii)] The ``bifurcation set'', namely the projection of $\mathcal{B}$ onto the parameter space $V_{\kappa}$, is an $(m-c)$-dimensional submanifold of $V_{\kappa}$. 
\item[(iii)] $\underline{f}^*$ is transverse to $\mathcal{B}^*$, namely, the parameterised family of vector fields is transverse to the manifold corresponding to $B$ in the space of $k$-jets.
\end{enumerate}

The first claim follows, via the implicit function theorem, from the regularity of $h$ at $(\tilde{x},\tilde{\kappa})$. The second claim follows because, additionally, $D_xh(\tilde{x}, \tilde{\kappa})$ has rank $n$. To see the third claim, it suffices (shrinking $W$ if necessary) to show that $\mathrm{im}D\underline{f}^*(\tilde{x},\tilde{\kappa}_1, \ldots, \tilde{\kappa}_c)$ is transverse to $\mathcal{B}^*$. Note that $\underline{h} = H \circ \underline{f}^*$, where $H\colon W^* \subseteq \mathbb{R}^{n+q} \to \mathbb{R}^{n}\times \mathbb{R}^c$ maps $y = (x, f, D_xf, \ldots, D^{(k)}_xf)$ to $(f, G(y))$, so that $\mathcal{B}^* \cap W^* = H^{-1}(0)$. Via the chain rule, $D\underline{h}(\tilde{x},\tilde{\kappa}_1, \ldots, \tilde{\kappa}_c) = DH(\tilde{y})\circ D\,\underline{f}^*(\tilde{x},\tilde{\kappa}_1, \ldots, \tilde{\kappa}_c)$ where $\tilde{y} = f^*(\tilde{x},\tilde{\kappa})$. We already know that $DH(\tilde{y})$ has rank $n+c$ by the regularity of $\tilde{G}$ at $\tilde{y}$. Thus $D\underline{h}$ has rank $n+c$ at $(\tilde{x},\tilde{\kappa}_1, \ldots, \tilde{\kappa}_c)$ if and only if $\mathrm{rank}\,D \underline{f}^*(\tilde{x},\tilde{\kappa}_1, \ldots, \tilde{\kappa}_c) = n+c$ and $\mathrm{im}D\underline{f}^*(\tilde{x},\tilde{\kappa}_1, \ldots, \tilde{\kappa}_c) \cap \mathrm{ker}DH(\tilde{y}) =\{0\}$, i.e., $\mathrm{im}D\underline{f}^*(\tilde{x},\tilde{\kappa}_1, \ldots, \tilde{\kappa}_c) \oplus \mathrm{ker}DH(\tilde{y}) \cong \mathbb{R}^{n+q}$. In other words, noting that $\mathrm{ker}DH(\tilde{y})$ is the tangent space to $\mathcal{B}^*$ at $\tilde{y}$, $\mathrm{im}D\underline{f}^*(\tilde{x},\tilde{\kappa}_1, \ldots, \tilde{\kappa}_c)$ is transverse to $\mathcal{B}^*$. 
\end{remark}

\subsection{Persistence of bifurcations in perturbed systems}

We now prove two basic lemmas on the persistence of bifurcations in perturbations of (\ref{eqgeneral}), in a form that is most useful for the results to follow. 

In Lemmas~\ref{lemregpert}~and~\ref{lemsingpert}, we assume that (\ref{eqgeneral}) undergoes the codimension-$c$ bifurcation $B$, unfolded transversely by the parameters $\kappa$, at some point $(\tilde{x},\tilde{\kappa}) \in V_x \times V_\kappa$, i.e., B1--B3 are all satisfied at $(\tilde{x},\tilde{\kappa})$. Let $k$ be the level of differentiability required in the definition of $B$. We also assume, w.l.o.g., that $V_\kappa$ is an open subset of $\mathbb{R}^c$, i.e., all but $c$ of the parameters have been fixed at their bifurcation values, and we leave free only $c$ parameters which unfold the bifurcation at $(\tilde{x}, \tilde{\kappa})$, so that $h:=(f,g) \colon W \subseteq \mathbb{R}^{n+c} \to \mathbb{R}^{n+c}$. Consequently, assumption B3 is now simply that $Dh$ is nonsingular at $(\tilde{x}, \tilde{\kappa})$.

\begin{lemma}[Persistence of bifurcations under regular perturbations]
\label{lemregpert}
Let $I\subseteq \mathbb{R}$ be an open interval with $0 \in I$, and consider the ODE
\begin{equation}
\label{eqregpert}
\dot x = \hat{f}(x, \kappa, \varepsilon)\,,
\end{equation}
where $(x,\kappa,\varepsilon) \in V_x \times V_\kappa \times I$, and $\hat{f}$ is $C^{k+s}$ ($s \geq 1$). We assume that (\ref{eqregpert}) is a regular perturbation of (\ref{eqgeneral}), namely, $\hat{f}(x,\kappa,0) = f(x,\kappa)$. Then, there exists $\varepsilon_0>0$ such that $(-\varepsilon_0, \varepsilon_0) \subseteq I$ and for any fixed $\varepsilon \in (-\varepsilon_0, \varepsilon_0)$, (\ref{eqregpert}) undergoes the bifurcation $B$, unfolded transversely by the parameters $\kappa$, at a point $(x(\varepsilon),\kappa(\varepsilon))$, where $x(\varepsilon)$ and $\kappa(\varepsilon)$ are $C^s$ and satisfy $x(0) = \tilde{x}$ and $\kappa(0) = \tilde{\kappa}$.
\end{lemma}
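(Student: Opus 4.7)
The plan is to reduce the lemma to a direct application of the implicit function theorem to a perturbed version of the bifurcation function $h$. First I would define the perturbed bifurcation function: reusing the same smooth extension $G$ from condition B2, set
\[
\hat{g}(x,\kappa,\varepsilon) := G\bigl(x, \hat{f}(x,\kappa,\varepsilon), D_x\hat{f}(x,\kappa,\varepsilon), \ldots, D^{(k)}_x\hat{f}(x,\kappa,\varepsilon)\bigr)\,,
\]
and $\hat{h} := (\hat{f}, \hat{g}) \colon W \times I \to \mathbb{R}^{n+c}$, possibly shrinking $W$ and $I$ so that the $k$-jet of $\hat{f}$ lands in the domain $W^*$ of $G$. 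Since $\hat{f}$ is $C^{k+s}$, its partial derivatives in $x$ up to order $k$ are $C^s$ jointly in $(x,\kappa,\varepsilon)$, and composition with the smooth $G$ yields $\hat{h} \in C^s$. By construction $\hat{h}(x,\kappa,0) = h(x,\kappa)$, so $\hat{h}(\tilde x, \tilde\kappa, 0) = 0$, and $D_{(x,\kappa)}\hat{h}(\tilde x, \tilde\kappa, 0) = Dh(\tilde x, \tilde\kappa)$ is nonsingular by the reformulation of B3 in the setting with $m=c$.

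Next I would apply the implicit function theorem to $\hat{h}$ at $(\tilde x, \tilde\kappa, 0)$, solving $\hat{h}(x,\kappa,\varepsilon)=0$ for $(x,\kappa)$ as a function of $\varepsilon$. This produces $\varepsilon_0 > 0$ and $C^s$ maps $\varepsilon \mapsto x(\varepsilon)$, $\varepsilon \mapsto \kappa(\varepsilon)$ defined on $(-\varepsilon_0,\varepsilon_0)$, with $x(0) = \tilde x$, $\kappa(0) = \tilde\kappa$, and $\hat{h}(x(\varepsilon),\kappa(\varepsilon),\varepsilon) = 0$. I would then unpack what this zero means: $\hat{f}(x(\varepsilon),\kappa(\varepsilon),\varepsilon) = 0$ gives B1, so the $k$-jet lies in $J^k_0(\mathbb{R}^n,\mathbb{R}^n)$; combined with $\hat{g}(x(\varepsilon),\kappa(\varepsilon),\varepsilon) = G(\hat{f}^*(x(\varepsilon),\kappa(\varepsilon),\varepsilon)) = 0$ and the fact that $G$ agrees with $\tilde G$ on $W^* \cap J^k_0$, we conclude $\hat{f}^*(x(\varepsilon),\kappa(\varepsilon),\varepsilon) \in \mathcal{B}^*$, which is B2.

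Finally, for the transversality condition B3 at the perturbed bifurcation point, I would argue by continuity. Since $D_{(x,\kappa)}\hat{h}$ is continuous in $(x,\kappa,\varepsilon)$ (as $\hat h \in C^s$ with $s \geq 1$) and is nonsingular at $(\tilde x, \tilde\kappa, 0)$, it remains nonsingular at $(x(\varepsilon),\kappa(\varepsilon),\varepsilon)$ after possibly shrinking $\varepsilon_0$. This is precisely the reformulated B3 for the fixed-$\varepsilon$ system, so the bifurcation $B$ is unfolded transversely by $\kappa$ at $(x(\varepsilon),\kappa(\varepsilon))$, as required.

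The argument is essentially mechanical once $\hat h$ is defined correctly; there is no serious obstacle. The one bookkeeping point to watch is that the perturbed jet $\hat{f}^*(x,\kappa,\varepsilon)$ must remain inside the neighbourhood $W^*$ on which $G$ is defined, and the bifurcation function only characterises $\mathcal{B}^*$ after restricting to $J^k_0$; both are handled automatically by shrinking domains and by the equilibrium condition $\hat f=0$ holding at the zeros of $\hat h$.
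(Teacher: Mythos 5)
Your proposal is correct and follows essentially the same route as the paper's proof: define $\hat{g}$ by composing $G$ with the $k$-jet of $\hat{f}$, set $\hat{h}=(\hat f,\hat g)$, apply the implicit function theorem at $(\tilde x,\tilde\kappa,0)$ using the nonsingularity of $D_{(x,\kappa)}h$, and recover transversality for small $\varepsilon$ by continuity of $D_{(x,\kappa)}\hat{h}$. The extra bookkeeping you note (keeping the perturbed jet inside $W^*$ and restricting to $J^k_0$) is handled the same way in the paper, by shrinking $\varepsilon_0$.
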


\begin{proof}
Define $\hat{g}(x,\kappa,\varepsilon) = G(x, \hat{f}(x,\kappa,\varepsilon), D_x\hat{f}(x,\kappa,\varepsilon), \ldots, D^{(k)}_x\hat{f}(x,\kappa,\varepsilon))$, and note that $\hat{g}(x,\kappa,0) = g(x,\kappa)$. Let $\hat{h}(x, \kappa, \varepsilon) := (\hat{f},\hat{g})(x, \kappa, \varepsilon)$ and note that $\hat{h}$ is $C^s$. By definition, for any fixed $\tilde\varepsilon \in I$, a bifurcation $B$ occurs for (\ref{eqregpert}) at $(x, \kappa) \in W$, and is unfolded transversely by the parameters $\kappa$, if $\hat{h}(x, \kappa, \tilde\varepsilon) = 0$ (conditions B1, B2), and $\hat{h}(\cdot, \cdot, \tilde\varepsilon)$ is regular at $(x, \kappa)$ (condition B3). 

By assumption, $D_{(x,\kappa)}\hat{h}|_{(\tilde{x},\tilde{\kappa},0)} = D_{(x,\kappa)}h|_{(\tilde{x},\tilde{\kappa})}$ is nonsingular, and so, by the implicit function theorem, we can find $\varepsilon_0 > 0$ such that for $\varepsilon \in (-\varepsilon_0, \varepsilon_0)$, there exist $C^s$ functions $x(\varepsilon)$, $\kappa(\varepsilon)$ satisfying $x(0) = \tilde{x}$ and $\kappa(0) = \tilde{\kappa}$ and $\hat{h}(x(\varepsilon), \kappa(\varepsilon), \varepsilon)=0$. Making $\varepsilon_0$ smaller if necessary, we can ensure that (i) $(x(\varepsilon), \kappa(\varepsilon)) \in W$, and (ii) $\hat{h}(\cdot, \cdot, \varepsilon)$ is regular at $(x(\varepsilon), \kappa(\varepsilon))$. The latter follows as $D_{(x,\kappa)}\hat{h}$ is continuous at $(\tilde{x},\tilde{\kappa},0)$. Thus, for each fixed $\varepsilon \in (-\varepsilon_0, \varepsilon_0)$, (\ref{eqregpert}) undergoes the bifurcation $B$ at $(x(\varepsilon), \kappa(\varepsilon),\varepsilon)$, unfolded transversely by the parameters $\kappa$. 
\end{proof}

We remark that in the case that $\hat{f}$ in Lemma~\ref{lemregpert} is smooth, then so are $\hat{g}$ and $\hat{h}$, and we may infer that $x(\varepsilon)$ and $\kappa(\varepsilon)$ are smooth.

\begin{lemma}[Persistence of bifurcations under singular perturbations]
\label{lemsingpert}
Let $I\subseteq \mathbb{R}$ be an open interval with $0 \in I$, let $n' \geq 1$, and let $V_y \subseteq \mathbb{R}^{n'}$ be open. Consider the ODE
\begin{equation}
\label{eqsingpert}
\begin{array}{rcl}
\dot x &=& \tilde{f}(x, y, \kappa, \varepsilon)\,,\\
\varepsilon \dot y & = & q(x, y, \kappa, \varepsilon)\,,\\
\dot \kappa & = & 0\,.
\end{array}
\end{equation}
Here $(x,y,\kappa,\varepsilon)\in V_x \times V_y \times V_\kappa \times I$, and $\tilde{f}$ and $q$ are $C^{k+s+1}$ ($s \geq 1$) functions on $V_x \times V_y  \times V_\kappa \times I$, taking values in $\mathbb{R}^n$ and $\mathbb{R}^{n'}$ respectively. We assume that (\ref{eqsingpert}) is a singular perturbation of (\ref{eqgeneral}), namely, if $q(x,y,\kappa,0)=0$ then $\tilde{f}(x,y,\kappa,0)= f(x,\kappa)$. Assume, moreover, that there exists $\tilde{y} \in V_y$ such that $q(\tilde{x},\tilde{y},\tilde{\kappa},0)=0$, and the eigenvalues of $D_{y}q(\tilde{x},\tilde{y},\tilde{\kappa},0)$ have negative real parts. Then:
\begin{enumerate}
\item[(i)] There exist $\varepsilon_0 >0$ with $(-\varepsilon_0, \varepsilon_0) \subseteq I$, neighbourhoods $U_{x} \subseteq V_x, U_y \subseteq V_y$ and $U_\kappa \subseteq V_\kappa$ of $\tilde{x}$, $\tilde{y}$ and $\tilde{\kappa}$ respectively, and a $C^{k+s}$ function $\psi\colon U_x \times U_\kappa \times (-\varepsilon_0,\varepsilon_0) \to U_y$ satisfying $\psi(\tilde{x}, \tilde{\kappa},0) = \tilde{y}$, and such that for any fixed $\varepsilon \in (-\varepsilon_0, \varepsilon_0)\backslash\{0\}$, $\mathcal{E}_\varepsilon := \{(x,\psi(x,\kappa,\varepsilon),\kappa)\,:\, (x,\kappa) \in U_x \times U_\kappa\}$ is locally invariant for (\ref{eqsingpert}). 
\item[(ii)] For each fixed $\varepsilon \in (-\varepsilon_0, \varepsilon_0)\backslash\{0\}$, the bifurcation $B$, unfolded transversely by the parameters $\kappa$, occurs in the restricted system $\dot x = \hat{f}(x,\kappa,\varepsilon) := \tilde{f}(x, \psi(x,\kappa,\varepsilon), \kappa, \varepsilon)$ at a point $(x(\varepsilon), \kappa(\varepsilon))$ where $x(\varepsilon)$ and $\kappa(\varepsilon)$ are $C^s$ and satisfy $x(0) = \tilde{x}$, $\kappa(0) = \tilde{\kappa}$. I.e., observing that $(x, \kappa)$ serves as a local coordinate on $\mathcal{E}_\varepsilon$, the bifurcation $B$ occurs in (\ref{eqsingpert}) restricted to $\mathcal{E}_\varepsilon$.

\item[(iii)] For each fixed $\varepsilon \in (0, \varepsilon_0)$, letting $(x(\varepsilon),\kappa(\varepsilon))$ be the bifurcation point of $\dot x = \hat{f}(x,\kappa,\varepsilon)$, and defining $y(\varepsilon) = \psi(x(\varepsilon), \kappa(\varepsilon), \varepsilon)$, the eigenvalues of (\ref{eqsingpert}) at $(x(\varepsilon), y(\varepsilon), \kappa(\varepsilon))$ corresponding to directions transverse to $\mathcal{E}_\varepsilon$ have negative real parts.

\end{enumerate}
\end{lemma}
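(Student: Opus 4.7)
The plan is to decouple the statement into two steps: first, use geometric singular perturbation theory to produce a slow manifold $\mathcal{E}_\varepsilon$ on which the dynamics are a regular perturbation of (\ref{eqgeneral}); second, invoke Lemma~\ref{lemregpert} on that reduced system to transfer the bifurcation $B$. For (i), I would note that $D_yq(\tilde x,\tilde y,\tilde\kappa,0)$ has eigenvalues with negative real parts and is therefore nonsingular. The implicit function theorem applied to $q(x,y,\kappa,0)=0$ yields a $C^{k+s+1}$ function $\psi_0(x,\kappa)$ defined near $(\tilde x,\tilde\kappa)$, with $\psi_0(\tilde x,\tilde\kappa)=\tilde y$ and $q(x,\psi_0(x,\kappa),\kappa,0)\equiv 0$. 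Its graph $\mathcal{E}_0$ is the local critical manifold of (\ref{eqsingpert}), and by the spectral hypothesis it is normally hyperbolic and attracting at $(\tilde x,\tilde y,\tilde\kappa)$. A standard local application of Fenichel's theorem, after restricting to a compact sub-manifold-with-boundary of $\mathcal{E}_0$ containing that point in its interior, then produces $\varepsilon_0>0$ and a $C^{k+s}$ map $\psi\colon U_x\times U_\kappa\times(-\varepsilon_0,\varepsilon_0)\to U_y$ with $\psi(\cdot,\cdot,0)=\psi_0$, whose graph $\mathcal{E}_\varepsilon$ is locally invariant for (\ref{eqsingpert}) for each nonzero $\varepsilon$. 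Writing $\dot\kappa=0$ as in (\ref{eqsingpert}) is precisely what makes the parameter dependence of $\psi$ inherit the same regularity as its $x$-dependence.

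For (ii), I would set $\hat f(x,\kappa,\varepsilon):=\tilde f(x,\psi(x,\kappa,\varepsilon),\kappa,\varepsilon)$, which is $C^{k+s}$ as a composition of a $C^{k+s+1}$ map with a $C^{k+s}$ map. At $\varepsilon=0$, since $q(x,\psi_0(x,\kappa),\kappa,0)=0$, the singular perturbation hypothesis gives $\hat f(x,\kappa,0)=\tilde f(x,\psi_0(x,\kappa),\kappa,0)=f(x,\kappa)$, so $\dot x=\hat f(x,\kappa,\varepsilon)$ is a regular perturbation of (\ref{eqgeneral}). Lemma~\ref{lemregpert} applied to $\hat f$ then provides, possibly after shrinking $\varepsilon_0$, $C^s$ curves $\varepsilon\mapsto(x(\varepsilon),\kappa(\varepsilon))$ with $(x(0),\kappa(0))=(\tilde x,\tilde\kappa)$ such that, for each nonzero $\varepsilon$ in the range, $\hat f(\cdot,\cdot,\varepsilon)$ undergoes $B$ transversely at $(x(\varepsilon),\kappa(\varepsilon))$. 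Since $(x,\kappa)$ is a local coordinate on $\mathcal{E}_\varepsilon$, this is exactly the content of (ii).

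For (iii), I would examine the Jacobian of the $(x,y)$-subsystem of (\ref{eqsingpert}) with $\kappa$ frozen, at the equilibrium $(x(\varepsilon),y(\varepsilon),\kappa(\varepsilon))$; it has block form $J(\varepsilon)=\bigl(\begin{smallmatrix} D_x\tilde f & D_y\tilde f \\ \varepsilon^{-1}D_xq & \varepsilon^{-1}D_yq\end{smallmatrix}\bigr)$. A standard singular perturbation eigenvalue argument, either via a block Schur complement or via the tangent/normal splitting provided by Fenichel's fibration, shows that its $n+n'$ eigenvalues split into $n$ slow eigenvalues, equal to those of $D_x\hat f(x(\varepsilon),\kappa(\varepsilon),\varepsilon)$, and $n'$ fast eigenvalues of the form $\varepsilon^{-1}(\lambda_i+O(\varepsilon))$, with $\lambda_i$ the eigenvalues of $D_yq(\tilde x,\tilde y,\tilde\kappa,0)$. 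Since the latter have negative real parts, the fast eigenvalues, which are exactly those corresponding to directions transverse to $\mathcal{E}_\varepsilon$, have strictly negative real parts for all small $\varepsilon>0$. The principal obstacle in all of this is step (i): the classical statement of Fenichel's theorem is for compact critical manifolds, and some care is needed in localising and in tracking how many derivatives survive from the $C^{k+s+1}$ input to the $C^{k+s}$ slow manifold, so that Lemma~\ref{lemregpert} can be applied downstream with exactly $s$ derivatives in $\varepsilon$.
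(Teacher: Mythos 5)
Your proposal is correct and follows essentially the same route as the paper: the implicit function theorem for the critical manifold $\psi_0$, Fenichel's theorem (applied after the standard compactness/localisation and fast-time rescaling considerations) for the $C^{k+s}$ slow manifold $\mathcal{E}_\varepsilon$, then Lemma~\ref{lemregpert} applied to $\hat f(x,\kappa,\varepsilon)=\tilde f(x,\psi(x,\kappa,\varepsilon),\kappa,\varepsilon)$ for part (ii), and the fast/slow eigenvalue splitting for part (iii). Your treatment of (iii) via the block Jacobian and the $\varepsilon^{-1}(\lambda_i+O(\varepsilon))$ fast eigenvalues is merely a more explicit version of the continuity argument the paper states in one sentence.
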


\begin{proof}
Rescaling time in (\ref{eqsingpert}) in the usual way \cite{Fenichel79} gives the ``fast time'' system
\begin{equation}
\label{eqsingperta}
\begin{array}{rcl}
\dot x &=& \varepsilon\tilde{f}(x, y, \kappa, \varepsilon)\,,\\
\dot y & = & q(x, y, \kappa, \varepsilon)\,,\\
\dot \kappa & = & 0\,.
\end{array}
\end{equation}

Let $\mathcal{E}'_0$ be the zero set of $q$ when $\varepsilon=0$, namely $\mathcal{E}'_0 = \{(x,y,\kappa) \in V_x \times V_y \times V_\kappa\,:\,q(x,y,\kappa,0)=0\}$, and consider the point $(\tilde{x},\tilde{y},\tilde{\kappa}) \in \mathcal{E}'_0$. As $D_{y}q(\tilde{x},\tilde{y},\tilde{\kappa},0)$ is nonsingular, by the implicit function theorem we can, in a neighbourhood of $(\tilde{x},\tilde{y},\tilde{\kappa})$, represent $\mathcal{E}'_0$ as the graph of a $C^{k+s+1}$ function, say $\psi_0$, giving $y$ in terms of $x$ and $\kappa$ (i.e., such that $\psi_0(\tilde{x},\tilde{\kappa}) = \tilde{y}$). Let $U_{x} \subseteq V_x, U_y \subseteq V_y$ and $U_\kappa \subseteq V_\kappa$ be neighbourhoods of $\tilde{x}$, $\tilde{y}$ and $\tilde{\kappa}$ respectively such that 
\[
\mathcal{E}_0:= \mathcal{E}'_0 \cap (U_{x} \times U_y \times U_\kappa) = \{(x,\psi_0(x,\kappa),\kappa)\,:\, (x,\kappa) \in U_x \times U_\kappa\}\,.
\]
Fenichel's theory \cite{Fenichel79} now tells us that there exists $\varepsilon_1>0$ with $(-\varepsilon_1, \varepsilon_1) \subseteq I$ such that for each $\varepsilon \in (-\varepsilon_1,\varepsilon_1)$, (\ref{eqsingperta}) has a locally invariant, ``slow'' manifold $\mathcal{E}_{\varepsilon}$ close (on compact sets) to $\mathcal{E}_0$. In particular, shrinking $U_x$ and $U_\kappa$ if necessary, there exists a $C^{k+s}$ function $\psi\colon U_x \times U_\kappa \times (-\varepsilon_1,\varepsilon_1) \to U_y$ such that $\psi(x,\kappa,0) = \psi_0(x,\kappa)$ and 
\[
\mathcal{E}_\varepsilon := \{(x,\psi(x,\kappa,\varepsilon),\kappa)\,:\, (x,\kappa) \in U_x \times U_\kappa\}\,
\]
is locally invariant for (\ref{eqsingperta}) for any $\varepsilon \in (-\varepsilon_1,\varepsilon_1)$, and hence is locally invariant for (\ref{eqsingpert}) whenever $\varepsilon \in (-\varepsilon_1,\varepsilon_1)\backslash\{0\}$. In case $f$ was smooth, we may assume that $\psi$ is $C^r$ for any prescribed $r < \infty$, although we cannot infer that $\psi$ is smooth (see also Theorem~2 in \cite{JonesSingular}). For fixed $\varepsilon \in (-\varepsilon_1,\varepsilon_1)\backslash\{0\}$, system (\ref{eqsingpert}) restricted to $\mathcal{E}_\varepsilon$ can be written as
\begin{equation}
\label{eqsingpertlocal}
\dot x = \hat{f}(x,\kappa,\varepsilon) := \tilde{f}(x, \psi(x,\kappa, \varepsilon), \kappa, \varepsilon)\,.
\end{equation}
Here, we are treating $(x,\kappa)$ as local coordinates on $\mathcal{E}_\varepsilon$.

Note that $\hat{f}$ is $C^{k+s}$ on $U_x \times U_\kappa \times (-\varepsilon_1,\varepsilon_1)$ and the conditions of Lemma~\ref{lemregpert} are satisfied for (\ref{eqsingpertlocal}). By Lemma~\ref{lemregpert}, there exists $\varepsilon_0 \in (0,\varepsilon_1]$ such that for $\varepsilon \in (-\varepsilon_0, \varepsilon_0)$, there exist $C^s$ functions $x(\varepsilon)$, $\kappa(\varepsilon)$ satisfying $x(0) = \tilde{x}$ and $\kappa(0) = \tilde{\kappa}$ and such that the bifurcation $B$ occurs in the restricted system (\ref{eqsingpertlocal}) at $(x(\varepsilon), \kappa(\varepsilon))$, and is unfolded transversely by the parameters $\kappa$. In case $\tilde{f}$ and $q$ were smooth, we may choose $x(\varepsilon)$, $\kappa(\varepsilon)$ to be $C^r$ for any prescribed $r < \infty$. Setting $y(\varepsilon) = \psi(x(\varepsilon), \kappa(\varepsilon), \varepsilon)$, clearly $(x(\varepsilon), y(\varepsilon), \kappa(\varepsilon)) \in V_x \times V_y \times V_\kappa$.

Moreover, shrinking $\varepsilon_0$ if necessary, our assumption that the eigenvalues of $D_{y}q(\tilde{x},\tilde{y},\tilde{\kappa},0)$ have negative real parts implies that, for $\varepsilon \in (0, \varepsilon_0)$, eigenvalues of the bifurcating equilibrium $(x(\varepsilon), y(\varepsilon), \kappa(\varepsilon))$ of (\ref{eqsingpert}) in directions transverse to $\mathcal{E}_{\varepsilon}$ have negative real parts. These directions thus do not affect the nature of the bifurcation, merely adding to the dimension of the nonhyperbolic equilibrium's stable manifold.  
\end{proof}

\begin{remark}[Lifting bifurcations from $\mathbb{R}^n$ to $\mathbb{R}^{n+n'}$]
  \label{remB1}
  In a slight abuse of terminology, we will refer to the conclusion of Lemma~\ref{lemsingpert} by saying that for each fixed $\varepsilon \in (0,\varepsilon_0)$ (\ref{eqsingpert}) undergoes the bifurcation $B$ unfolded transversely by the parameters $\kappa$. This should be read to mean that bifurcation $B$, unfolded transversely by the parameters $\kappa$, occurs when we restrict attention to some exponentially attracting, locally invariant, $C^{k+s}$ submanifold of $V_x \times V_y \times V_\kappa$ of the same dimension as $V_x \times V_{\kappa}$, namely, $\mathcal{E}_{\varepsilon}$. In this paper we take this as the {\em definition} of what we mean when we claim that a given bifurcation $B$, originally defined for systems on $\mathbb{R}^n$, occurs on $\mathbb{R}^{n+n'}$ where $n' > 0$. This is analogous to assuming that two bifurcations at nonhyperbolic equilbria are equivalent if they are defined by the same conditions on derivatives when we restrict attention to a local centre manifold \cite{carr2012applications}, with the further restriction that additional directions correspond to eigenvalues with negative real parts. 
\end{remark} 

\begin{example}[A cusp bifurcation in $\mathbb{R}$ and $\mathbb{R}^2$]
  For a system on $\mathbb{R}$, the basic conditions for a cusp bifurcation \cite{kuznetsov:2023} without any assumptions of nondegeneracy correspond to the set $\mathcal{B}^* \subseteq J^2(\mathbb{R},\mathbb{R})$ defined by $f=f_x=f_{xx} = 0$. Correspondingly in $\mathbb{R}^2$, assuming attracting dynamics transverse to the centre manifold, we would have $\hat{\mathcal{B}}^* \subseteq J^2(\mathbb{R}^2,\mathbb{R}^2)$ defined by $\{(x,y,f,g,\ldots)\,:\,f=g=0,f_xg_y-f_yg_x=0,f_x+g_y < 0, \langle p, L(q,q)\rangle = 0\}$. Here subscripts denote partial differentiation (e.g. $f_x:= \partial f/\partial x$); $p$ and $q$ are (left and right) eigenvectors corresponding to the zero eigenvalue of the Jacobian matrix $D_{x,y}(f,g)$ which can be chosen to depend smoothly on the first derivatives of $f$ and $g$; and $L$ is the second derivative of $(f,g)$ regarded as a multilinear map $\mathbb{R}^2 \times \mathbb{R}^2 \to \mathbb{R}^2$ (see Section~5.4 in Kuznetsov \cite{kuznetsov:2023}). The fact that $p$ and $q$ can be chosen to depend smoothly on the entries of $D_{x,y}(f,g)$ relies on the fact that $0$ is a simple eigenvalue of $D_{x,y}(f,g)$, and thus the final condition is well defined and smooth only in a neighbourhood of points satisfying the remaining conditions.
\end{example}

\begin{remark}[Special structures in CRNs and generalisation of the results]
\label{remgeneralise}
We will see, in the proof of the main theorem on CRNs to follow, that the function $\psi_0$ appearing in the proof of Lemma~\ref{lemsingpert} is globally defined (without need to call on the implicit function theorem); and eigenvalues of $D_{y}q(\cdot, \cdot, \cdot,0)$ are real and negative throughout the graph of $\psi_0$. These observations potentially allow generalisation of the results to the case of global bifurcations, and bifurcations of objects other than equilibria. 
\end{remark}

\section{The inheritance of bifurcations in CRNs}
\label{secmain}

\subsection{Preliminaries on reaction networks and mass action systems}
We gather some notation and terminology on CRNs and mass action systems in brief.

Let $\mathbb{R}_+$ denote the positive real numbers, and $\mathbb{R}^n_+$ denote the positive orthant in $\mathbb{R}^n$. Given a vector of indeterminates $(x_1, \ldots, x_n)^\mathrm{t}$ and any $\alpha \in \mathbb{R}^n$, we write $x^\alpha$ as an abbreviation for the (generalised) monomial $x_1^{\alpha_1}\cdots x_n^{\alpha_n}$, and given $A \in \mathbb{R}^{m \times n}$, $x^A$ is an abbreviation for the vector of monomials whose $j$th element is $\prod_i x_i^{A_{ji}}$. Given a scalar $\varepsilon$, we write $\bm{\varepsilon}$ for the vector each of whose entries is $\varepsilon$, where the length of this vector is assumed to be clear from the context. Similarly $\bm{1}$ refers to a vector of $1$s whose length is inferred from the context. The symbol ``$\circ$'' will denote the entrywise product of matrices or vectors of the same dimensions.

We begin with a collection of (chemical) species, which can be regarded as an abstract finite set. A {\em complex} is a formal linear combination of species, where the coefficients are nonnegative integers. We denote by $\mathsf{0}$ the empty complex. A chemical reaction is the process by which one complex, termed the {\em reactant complex}, is converted into another, termed the {\em product complex}, or more formally an ordered pair of complexes. Given complexes $C$ and $D$, we write $C \rightarrow D$ for the reaction with reactant complex $C$ and product complex $D$. Given any species $\mathsf{Y}$, a reaction of the form $\mathsf{0} \rightarrow \mathsf{Y}$ (resp., $\mathsf{Y} \rightarrow \mathsf{0}$) is called an {\em inflow} (resp., {\em outflow}) reaction. A CRN is a finite collection of chemical reactions on a given set of species. A CRN is {\em fully open} if it includes the reactions $\mathsf{0} \rightarrow \mathsf{X}_i$ and $\mathsf{X}_i \rightarrow \mathsf{0}$ for each of its species $\mathsf{X}_i$.

Let us consider a given CRN, say $\mathcal{R}$, involving $m$ reactions on $n$ species $\mathsf{X}_1, \ldots, \mathsf{X}_n$, denoted collectively by $\mathsf{X}$. The reactant and product complexes of each reaction of $\mathcal{R}$ are naturally associated with (nonnegative integer) vectors, say $a,b \in \mathbb{R}^n$, termed the {\em reactant vector} and {\em product vector} of the reaction; and we may write the corresponding complexes as $a \cdot \mathsf{X},\, b\cdot \mathsf{X}$, and the reaction as $a \cdot \mathsf{X} \rightarrow b \cdot \mathsf{X}$. A CRN is termed {\em bimolecular} if the sum of entries in each of its reactant and product vectors never exceeds two. 

Given the chemical reaction $a \cdot \mathsf{X} \rightarrow b \cdot \mathsf{X}$, the vector $b-a \in \mathbb{R}^n$ is termed the {\em reaction vector} of the reaction. Choosing an ordering on the reactions of $\mathcal{R}$, we may write the reactant vectors as the columns of the {\em reactant matrix}, say $\Gamma_l$, and write the reaction vectors as the columns of the {\em stoichiometric matrix}, say $\Gamma$. The intersection of cosets of $\mathrm{im}\,\Gamma$ with $\mathbb{R}^n_+$ are the {\em positive stoichiometric classes} of the network. The {\em rank} of the CRN is defined as the rank of $\Gamma$, i.e., the dimension of the positive stoichiometric classes, which is clearly independent of the chosen orderings on species and reactions. 

In order to give $\mathcal{R}$ {\em mass action kinetics}, we require an additional positive vector, say $\kappa \in \mathbb{R}^m_+$, where $\kappa_i$ is the {\em rate constant} of the $i$th reaction. The network then defines a system of ordinary differential equations on the positive orthant in $\mathbb{R}^n$ of the form
\begin{equation}
  \label{eqMA}
\dot x = \Gamma (\kappa \circ x^{\Gamma_l^\mathrm{t}})\,.
\end{equation}
Here $x \in \mathbb{R}^n_+$ is the vector of species concentrations with $x_i$ being the concentration of $\mathsf{X}_i$, $\Gamma$ is the $n \times m$ stoichiometric matrix of the network, and $\Gamma_l$ is the $n \times m$ reactant matrix. We remark that in this paper we are only interested in evolution on the positive orthant, and hence we assume throughout that $x$ is a strictly positive vector. In a harmless overloading of terminology, we will refer to either a CRN with an associated vector of rate constants, or the corresponding differential equation (\ref{eqMA}), as a {\em mass action system}. When denoting a mass action system, we often put the rate constant associated with a reaction above the corresponding reaction, e.g., $C \overset{\varepsilon}{\rightarrow} D$ denotes the conversion of complex $C$ to complex $D$ with rate constant $\varepsilon$. 

\subsection{The main theorem}
We are now ready to state and prove the main theorem. We consider six possible ``elementary'' enlargements of a CRN $\mathcal{R}$, previously gathered and discussed in \cite{banajiborosnonlinearity}:
\begin{enumerate}
\item[E1.] {\em A new linearly dependent reaction.} We add to $\mathcal{R}$ a new reaction involving only existing chemical species of $\mathcal{R}$, and in such a way that the rank of the network is preserved.
\item[E2.] {\em The fully open extension.} We add in (if absent) all chemical reactions of the form $\mathsf{0} \rightarrow \mathsf{X}_i$ and $\mathsf{X}_i \rightarrow \mathsf{0}$ for each chemical species $\mathsf{X}_i$ of $\mathcal{R}$.
\item[E3.] {\em A new linearly dependent species.} We add a new chemical species into some nonempty subset of the reactions of $\mathcal{R}$, in such a way that the rank of the network is preserved.
\item[E4.] {\em A new species and its inflow-outflow.} We add a new chemical species, say $\mathsf{Y}$, into some subset (perhaps empty) of the reactions of $\mathcal{R}$, and also add the inflow and outflow reactions $\mathsf{0} \rightarrow \mathsf{Y}$ and $\mathsf{Y} \rightarrow \mathsf{0}$. 
\item[E5.] {\em New reversible reactions involving new species.} We add into $\mathcal{R}$ some new reversible reactions involving at least as many new species. Moreover, the new species figure nontrivially in the enlarged CRN in the sense that the submatrix of the new stoichiometric matrix corresponding to the added species has rank equal to the number of new reversible reactions.
\item[E6.] {\em Splitting reactions.} We split some reactions of $\mathcal{R}$ and insert complexes involving at least as many new species as the number of reactions split. Moreover, the new species figure nontrivially in the enlarged CRN in the sense that the submatrix of the new stoichiometric matrix corresponding to the added species has rank equal to the number of reactions which are split.
\end{enumerate}

\begin{remark}
The nondegeneracy assumption associated with enlargements E5 and E6 is discussed further in Remark~2.4 of \cite{banajisplit}. Its necessity, at least in the case of enlargement E5, is illustrated by an example in \cite[Section~2.3]{Gutierrez2023}.
\end{remark}

We will say that a CRN ``admits the bifurcation $B$ unfolded transversely by its rate constants'' if, in local coordinates on some positive stoichiometric class, the bifurcation $B$ occurs and is unfolded transversely by the rate constants in the sense of Remark~\ref{remB1}. With this terminology in mind, the following is the main result of this paper.

\begin{thm}
\label{thmbifinherit}
Let $\mathcal{R}$ be a CRN which, with mass action kinetics, admits a local bifurcation $B$ unfolded transversely by its rate constants on the positive orthant. Let $\mathcal{R}'$ be a CRN obtained from $\mathcal{R}$ by any finite sequence of enlargements E1--E6 above. Then $\mathcal{R}'$, with mass action kinetics, admits the bifurcation $B$ unfolded transversely by its rate constants on the positive orthant. 
\end{thm}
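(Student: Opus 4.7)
The plan is to proceed by induction on the number of elementary enlargements, reducing the theorem to a single application of each of E1--E6. For each enlargement type I would introduce one of the newly available rate constants (or its reciprocal, or the conservation level created by the enlargement) as a small parameter $\varepsilon$, and check that at $\varepsilon = 0$ the flow of $\mathcal{R}'$, restricted to a suitable locally invariant manifold within a positive stoichiometric class, coincides with that of $\mathcal{R}$. Either Lemma~\ref{lemregpert} or Lemma~\ref{lemsingpert} then produces a bifurcation point of $\mathcal{R}'$ nearby, unfolded transversely by the rate constants inherited from $\mathcal{R}$, and Remark~\ref{remB1} supplies the appropriate notion of ``same bifurcation'' in the higher-dimensional state space.

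The \textbf{regular} enlargements E1 and E2 are handled directly by Lemma~\ref{lemregpert}. For E1, assign the rate constant $\varepsilon$ to the new linearly dependent reaction; since its reaction vector lies in $\mathrm{im}\,\Gamma$, positive stoichiometric classes are unchanged and the vector field of $\mathcal{R}'$ is $f(x,\kappa) + \varepsilon g(x)$ for some monomial vector $g$. For E2, assign rate constant $\varepsilon$ to each newly added inflow and outflow; although the rank typically increases, each positive stoichiometric class of $\mathcal{R}'$ projects onto one of $\mathcal{R}$, and the contribution of the new reactions is again $O(\varepsilon)$. In both cases the $\varepsilon = 0$ system is $\mathcal{R}$, so the bifurcation persists by Lemma~\ref{lemregpert}.

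For the \textbf{linearly dependent species} case E3, the added species $\mathsf{Y}$ satisfies an affine conservation law $y = c^{\mathrm{t}} x + K$ on each positive stoichiometric class of $\mathcal{R}'$, with $K$ determined by the class. Substituting for $y$ gives an ODE in $x$ alone whose right-hand side equals that of $\mathcal{R}$ with each rate constant $\kappa_j$ replaced by $\kappa_j (c^{\mathrm{t}} x + K)^{\alpha_j}$, where $\alpha_j \geq 0$ is the stoichiometric coefficient of $\mathsf{Y}$ in reaction $j$. After rescaling $\kappa_j \mapsto \kappa_j / K^{\alpha_j}$ and setting $\varepsilon = 1/K$, the $\varepsilon = 0$ limit recovers $\mathcal{R}$ exactly, and Lemma~\ref{lemregpert} completes the argument.

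The \textbf{singular} enlargements E4, E5 and E6 invoke Lemma~\ref{lemsingpert}, and are where I expect the main obstacle to lie. Here I would scale the rate constants of the newly added reactions as $1/\varepsilon$, placing the concentrations of the new species in the role of the fast variable $y$ in (\ref{eqsingpert}). For E4, the inflow and outflow rates $\alpha/\varepsilon$ and $\beta/\varepsilon$ of the new species $\mathsf{Y}$ give, after rescaling, a fast equation of the form $\dot y = \alpha - \beta y + O(\varepsilon)$, with attracting equilibrium $y^* = \alpha/\beta$ and eigenvalue $-\beta < 0$. For E5 and E6, the fast subsystem is linear in $y$ with coefficient matrix proportional to the submatrix $\Gamma_{\mathrm{new}}$ of the stoichiometric matrix for the added species multiplied by a diagonal matrix of positive monomials in $x$; the nondegeneracy hypothesis in E5 and E6 (full rank of $\Gamma_{\mathrm{new}}$) together with the reversible-pair structure (E5) or split-reaction structure (E6) is designed to force this Jacobian to have spectrum in the open left half-plane, yielding an exponentially attracting slow manifold $\mathcal{E}_\varepsilon$ on which the reduced flow at $\varepsilon = 0$ coincides with that of $\mathcal{R}$. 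The \emph{hard part} is precisely the verification of the negative-real-part condition on $D_y q$: full rank of $\Gamma_{\mathrm{new}}$ alone does not imply contraction, and one must exploit the combinatorial structure of reversible and split reactions (for instance, by exhibiting a positive diagonal similarity, or a Lyapunov function adapted to the underlying mass action structure) to place the spectrum in the left half-plane globally along the graph of $\psi_0$, as foreshadowed in Remark~\ref{remgeneralise}.
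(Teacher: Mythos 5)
Your overall architecture coincides with the paper's: reduce by induction to a single enlargement, treat E1--E3 by Lemma~\ref{lemregpert}, treat E4--E6 by Lemma~\ref{lemsingpert}, and use Remark~\ref{remB1} to say what the bifurcation means in the higher-dimensional state space; your E1, E3 and E4 are essentially the paper's arguments. The genuine gap is in E5 and E6, where you explicitly defer the one step that carries the weight of the proof: verifying that the fast Jacobian has spectrum in the open left half-plane along the critical manifold. Saying one ``must exploit the combinatorial structure \ldots\ for instance by exhibiting a positive diagonal similarity, or a Lyapunov function'' names the goal, not an argument, and your setup would not deliver it as stated. Scaling the new rate constants simply by $1/\varepsilon$ and taking the raw concentrations of the new species as the fast variables does not in general give a well-posed slow--fast system: the paper scales the forward and backward rates by $\bm{\varepsilon}^{-\hat{b}^{\mathrm{t}}}$ and $\bm{\varepsilon}^{-(\hat{b}')^{\mathrm{t}}}$ (powers tied to the stoichiometries of the new species), restricts to the stoichiometric class containing $(\tilde{x},0,\bm{1})$ in its closure, and uses the rescaled variable $w=\varepsilon^{-1}\hat{y}$ as the fast variable; only then does the fast field $\hat{q}$ extend smoothly to $\varepsilon=0$ with a nontrivial limit. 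Moreover the fast subsystem is generally \emph{not} linear in the new variables (the exponents $\hat{b},\hat{b}',\hat{\beta}$ may exceed one), so your claim about a linear fast equation with coefficient matrix ``proportional to $\Gamma_{\mathrm{new}}$'' is incorrect. What is true, and what the paper proves by direct differentiation on the zero set of $\hat q(\cdot,\cdot,0)$, is that $D_w\hat{q}$ there has the form $-\hat{\beta}\,\mathrm{diag}(d_1)\,\hat{\beta}^{\mathrm{t}}\,\mathrm{diag}(d_2)$ with $d_1,d_2$ positive, i.e.\ a negative definite matrix times a positive diagonal one, hence with real negative eigenvalues; this uses the reversible-pair (E5) or chain (E6) structure, exactly as you suspected, but without the computation the two cases are not proved.

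A secondary, fixable, problem is E2. Assigning the same rate constant $\varepsilon$ to every inflow and outflow does not leave the original positive stoichiometric class $\mathcal{S}_0$ invariant, and at $\varepsilon=0$ the bifurcating equilibrium, viewed in the full (now $n$-dimensional) class of $\mathcal{R}'$, acquires $n-r$ additional zero eigenvalues, so neither Lemma~\ref{lemregpert} (wrong dimension) nor Lemma~\ref{lemsingpert} (no transverse hyperbolicity at $\varepsilon=0$) applies directly, and ``each class projects onto one of $\mathcal{R}$'' is not a substitute for an invariant manifold. The paper's remedy is to take inflow constants $\varepsilon\tilde{x}_i$ and outflow constants $\varepsilon$, so the added term is $\varepsilon(\tilde{x}-x)$: then $\mathcal{S}_0$ remains locally invariant for every $\varepsilon$, Lemma~\ref{lemregpert} applies to the restricted $r$-dimensional system, and the transverse eigenvalues equal $-\varepsilon<0$, which is what Remark~\ref{remB1} requires.
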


The key to proving Theorem~\ref{thmbifinherit} is to show that each of the modifications E1 to E6 results in either a regular perturbation problem satifying the conditions of Lemma~\ref{lemregpert}, or a singular perturbation problem satisfying the conditions of Lemma~\ref{lemsingpert}. Many of the necessary calculations appear in previous work, but we present at least outlines of each calculation. 

\begin{myproof}{Theorem~\ref{thmbifinherit}}
The result clearly holds in general if it holds in the special cases where $\mathcal{R}'$ is constructed from $\mathcal{R}$ via one of the six elementary enlargements E1--E6 (see Remark~\ref{remB1}), and so it is these six special cases that we prove. 

{\bf The basic set-up.} We assume that the CRN $\mathcal{R}$ involves $n$ species $\mathsf{X}_1, \ldots \mathsf{X}_n$ (denoted collectively by $\mathsf{X}$) and $m$ reactions, giving rise to the mass action ODE system (\ref{eqMA}), namely $\dot x = \Gamma (\kappa \circ x^{\Gamma_l^\mathrm{t}})$. Here $x = (x_1, \ldots, x_n) \in \mathbb{R}^n_{+}$ denotes the vector of species concetrations, $\Gamma$ is the $n \times m$ stoichiometric matrix, $\Gamma_l$ is the $n \times m$ reactant matrix, and $\kappa \in \mathbb{R}^m_+$ is the vector of rate constants. For ease of notation, we define $v(x, \kappa) := \kappa \circ x^{\Gamma_l^\mathrm{t}}$. Denote the rank of $\Gamma$ by $r$. 

Let $c$ be the codimension of the bifurcation $B$, and assume that $\mathcal{R}$ undergoes the bifurcation $B$ at $x = \tilde{x}$ and $\kappa = \tilde{\kappa}$. Without loss of generality we may assume that $\kappa \in \mathbb{R}^c_{+}$, i.e., we leave free only $c$ of the rate constants which unfold the bifurcation and fix the remaining $m-c$ rate constants at their values at the bifurcation point.

Let $\Gamma_0$ be any $n \times r$ matrix whose columns form a basis of $\mathrm{im}\,\Gamma$ (the stoichiometric subspace), and define the $r \times m$ matrix $\Lambda$ by $\Gamma = \Gamma_0\Lambda $. We can define a local coordinate $\theta \in \mathbb{R}^{r}$ on the positive stoichiometric class of $\tilde{x}$ by $x = \tilde{x} + \Gamma_0\theta$. Then $\theta$ evolves according to
\begin{equation}
\label{eqlocal}
\dot \theta = \Lambda v(\tilde{x} + \Gamma_0 \theta, \kappa) =: f(\theta,\kappa)\,,
\end{equation}
and the bifurcation now occurs at $(\theta, \kappa) = (0, \tilde{\kappa})$. With the bifurcation function $g\colon W \subseteq \mathbb{R}^{r} \times \mathbb{R}^c \to \mathbb{R}^c$ associated with the bifurcation in a neighbourhood of $(0, \tilde{\kappa})$ defined as in Section~\ref{secbif}, our assumptions so far imply that (i) $f(0,\tilde{\kappa})=0$, (ii) $g(0, \tilde{\kappa}) = 0$, and (iii) $\mathrm{rank}\,(D_{(\theta, \kappa)}h(0,\tilde{\kappa})) = r+c$, where $h = (f,g)$.

In all of the six constructions below, $\varepsilon$ is a scalar parameter to be controlled, and the physically relevant situation corresponds to $\varepsilon > 0$. In each case, where there is a clear correspondence between a reaction in the original network and one in the enlarged network, the corresponding reaction rate is left unchanged, unless explicitly stated otherwise.

{\bf Enlargement E1.} We obtain the enlarged CRN $\mathcal{R}'$ by adding to $\mathcal{R}$ a reaction involving only the existing species of $\mathcal{R}$ whose reaction vector is a linear combination of existing reaction vectors. Let the reactant complex of the new reaction be $\alpha \cdot \mathsf{X}$. By the assumption of linear dependence, there exists $s \in \mathbb{R}^{r}$ such that the new reaction has reaction vector $\Gamma_0s$. If we give the new reaction mass action kinetics with rate constant $\varepsilon$, then by easy calculations (see \cite{banajiCRNosci}, proof of Theorem~1) the local coordinate $\theta$ on the positive stoichiometric class of $\tilde{x}$ now evolves according to
\begin{equation}
\label{eqlocal1}
\dot \theta = \Lambda v(\tilde{x} + \Gamma_0 \theta, \kappa) + \varepsilon s (\tilde{x} + \Gamma_0 \theta)^\alpha =: \hat{f}(\theta,\kappa,\varepsilon)\,.
\end{equation}
We see immediately that (\ref{eqlocal1}) is a regular perturbation of (\ref{eqlocal}) with $\hat{f}$ smooth and $\hat{f}(\theta,\kappa,0) = f(\theta,\kappa)$. By Lemma~\ref{lemregpert} there exists $\varepsilon_0>0$ such that for $\varepsilon \in (-\varepsilon_0, \varepsilon_0)$, there exist smooth functions $\theta(\varepsilon),\kappa(\varepsilon)$ satisfying $\theta(0)=0$, $\kappa(0) = \tilde{\kappa}$ and such that (\ref{eqlocal1}) undergoes the bifurcation $B$, unfolded transversely by the rate constants $\kappa$, at $(\theta(\varepsilon),\kappa(\varepsilon))$. We may assume, reducing $\varepsilon_0$ if necessary, that $x(\varepsilon) := \tilde{x} + \Gamma_0\theta(\varepsilon)$ and $\kappa(\varepsilon)$ are positive, and so the bifurcation occurs on the positive orthant and at positive parameter values. 

{\bf Englargement E2.} We modify $\mathcal{R}$ by including inflows and outflows for all the species involved to obtain a CRN $\mathcal{R}'$. In other words, we add to $\mathcal{R}$ the reactions $\mathsf{0} \rightarrow \mathsf{X}_i$ and $\mathsf{X}_i \rightarrow \mathsf{0}$ if absent. We choose the rate constants for the added reactions $\mathsf{0} \rightarrow \mathsf{X}_i$ and $\mathsf{X}_i \rightarrow \mathsf{0}$ to be, respectively, $\varepsilon \tilde{x}_i$ and $\varepsilon$, where if either reaction already exists in $\mathcal{R}$ this is understood to mean that we add to its rate constant this quantity. The evolution of the species concentrations is now governed by
\begin{equation}
\label{eqglobal2}
\dot x = \Gamma v(x, \kappa) + \varepsilon (\tilde{x}-x)\,.
\end{equation}
This choice of rate constants for the inflows and outflows ensures that the positive stoichiometric class of $\tilde{x}$ in $\mathcal{R}$, say $\mathcal{S}_0$, remains locally invariant (although it is not a positive stoichiometric class of $\mathcal{R}'$, unless it happens to be equal to the entire positive orthant). We calculate that (see \cite{banajiCRNosci}, proof of Theorem~2) the local coordinate $\theta$ on $\mathcal{S}_0$ evolves according to 
\begin{equation}
\label{eqlocal2a}
\dot \theta = \Lambda  v(\tilde{x} + \Gamma_0 \theta, \kappa) - \varepsilon \theta =: \hat{f}(\theta,\kappa,\varepsilon) \,.
\end{equation}
Equation (\ref{eqlocal2a}) is a regular perturbation of (\ref{eqlocal}), as $\hat{f}$ is smooth and satisfies $\hat{f}(\theta,\kappa,0) = f(\theta,\kappa)$. By Lemma~\ref{lemregpert}, there exists $\varepsilon_0>0$ such that for $\varepsilon \in (-\varepsilon_0, \varepsilon_0)$, there exist $\theta(\varepsilon),\kappa(\varepsilon)$ satisfying $\theta(0)=0$, $\kappa(0) = \tilde{\kappa}$ and such that (\ref{eqlocal2a}) undergoes the bifurcation $B$, unfolded transversely by the rate constants $\kappa$, at $(\theta(\varepsilon),\kappa(\varepsilon))$. We may assume, by reducing $\varepsilon_0$ if necessary, that $x(\varepsilon) := \tilde{x} + \Gamma_0\theta(\varepsilon)$ and $\kappa(\varepsilon)$ are positive, and so the bifurcation occurs on the positive orthant and at positive parameter values. Moreover, when $\varepsilon>0$, eigenvalues of (\ref{eqglobal2}) at $(x(\varepsilon),\kappa(\varepsilon))$ associated with directions transverse to $\mathcal{S}_0$ are all real and negative (in fact, all equal to $-\varepsilon$, see \cite{banajiCRNosci}, proof of Theorem~2), simply adding to the dimension of the stable manifold of the bifurcating equilibrium. 

{\bf Enlargement E3.} We add into the existing reactions of $\mathcal{R}$ a new species $\mathsf{Y}$ in such a way that we do not change the rank of the network. Let $y$ denote the concentration of $\mathsf{Y}$, and let $\alpha_j$ denote its stoichiometry in the reactant complex of reaction $j$ in $\mathcal{R}'$. Choose the rate constants of $\mathcal{R}'$ so that the rate of the $j$th reaction is $\varepsilon^{\alpha_j} v_j(x,\kappa) y^{\alpha_j}$ (i.e., we multiply the rate constant of the $j$th reaction by $\varepsilon^{\alpha_j}$). By the assumption of linear dependence, there exists some $s \in \mathbb{R}^n$ such that the stoichiometric matrix of $\mathcal{R}'$ is
\[
\left(\begin{array}{c}\Gamma\\s^\mathrm{t}\Gamma\end{array}\right)\,.
\]
Note that $\mathcal{R}'$ has a new linear conservation law: $y-s^\mathrm{t}x$ is constant along trajectories, and we choose the value of this constant to be $\varepsilon^{-1}$. I.e., we focus our attention on the positive stoichiometric class, say $\mathcal{S}_\varepsilon$, of $\mathcal{R}'$ which includes the point $(\tilde{x}, s^\mathrm{t}\tilde{x} +\varepsilon^{-1})$. We can calculate (see \cite{banajiboroshofbauer}, proof of Theorem~1) that a local coordinate $\theta \in \mathbb{R}^{r}$ on $\mathcal{S}_\varepsilon$, defined as usual by $x = \tilde{x} + \Gamma_0\theta$, evolves according to
\begin{equation}
\label{eqlocal3}
\dot \theta = \Lambda (v(\tilde{x}+\Gamma_0 \theta, \kappa) \circ (1+\varepsilon s^\mathrm{t} (\tilde{x}+\Gamma_0\theta))^\alpha) =: \hat{f}(\theta,\kappa,\varepsilon)\,.
\end{equation}
Here $\alpha = (\alpha_1, \ldots, \alpha_n)$. Moreover, $\hat{f}$ is smooth, and satisfies $\hat{f}(\theta,\kappa,0) = f(\theta, \kappa)$. By Lemma~\ref{lemregpert}, there exists $\varepsilon_0>0$ such that for $\varepsilon \in (-\varepsilon_0, \varepsilon_0)$, there exist smooth functions $\theta(\varepsilon),\kappa(\varepsilon)$ satisfying $\theta(0)=0$, $\kappa(0) = \tilde{\kappa}$ and such that (\ref{eqlocal3}) undergoes the bifurcation $B$, unfolded transversely by the rate constants $\kappa$, at $(\theta(\varepsilon),\kappa(\varepsilon))$. We may assume, reducing $\varepsilon_0$ if necessary, that for all $\varepsilon \in (0,\varepsilon_0)$, $x(\varepsilon) := \tilde{x} + \Gamma_0\theta(\varepsilon)$, $y(\varepsilon) = s^\mathrm{t}x(\varepsilon) +\varepsilon^{-1}$ and $\kappa(\varepsilon)$ are positive, and so the bifurcation occurs on the positive orthant and at positive parameter values.

{\bf Enlargement E4.} Let us modify $\mathcal{R}$ to obtain a new CRN $\mathcal{R}'$ by adding into some subset of the reactions of $\mathcal{R}$ a new species $\mathsf{Y}$ in an arbitrary way, and also including inflow and outflow reactions $\mathsf{0} \rightarrow \mathsf{Y}$ and $\mathsf{Y} \rightarrow \mathsf{0}$. We set the rate constants of these added flow reactions to be $\varepsilon^{-1}$. Let $\alpha_i$ be the stoichiometric coefficient of $\mathsf{Y}$ in the reactant complex of the $i$th reaction and let $\alpha = (\alpha_1, \ldots, \alpha_m)$. Let $s_i$ be the net change in the stoichiometry of $\mathsf{Y}$ in the $i$th reaction and let $s=(s_1, \ldots, s_m)$. For each $\varepsilon \neq 0$, the dynamics of $\mathcal{R}'$ is now governed by
\begin{equation}
\label{eqglobal4}
\begin{array}{rcl}\dot x &=& \Gamma (v(x, \kappa)\circ y^\alpha)\,,\\\varepsilon \dot y & = & \varepsilon s (v(x,\kappa)\circ y^\alpha) + (1-y)\,.\end{array}
\end{equation}
We consider the evolution on the positive stoichiometric class, say $\mathcal{S}'$, which includes the point $(\tilde{x},1)$. We can choose $(\theta, y)$ to be a local coordinate on $\mathcal{S}'$ where, as usual, $\theta$ is defined by $x = \tilde{x} + \Gamma_0\theta$. We then have, for the evolution on $\mathcal{S}'$,
\begin{equation}
\label{eqlocal4}
\begin{array}{rcl}\dot \theta &=& \Lambda  (v(\tilde{x}+\Gamma_0\theta,\kappa)\circ y^\alpha)\,,\\\varepsilon \dot y & = & \varepsilon s (v(\tilde{x}+\Gamma_0\theta,\kappa)\circ y^\alpha) + (1-y)\,,\\\dot \kappa & = & 0\,.\end{array}
\end{equation}
(We have introduced trivial dynamics for the parameter $\kappa$.) Defining
\[
q(\theta, y,\kappa, \varepsilon):=\varepsilon s (v(\tilde{x}+\Gamma_0\theta,\kappa)\circ y^\alpha) + (1-y)\,,
\]
observe that $q(\theta,y,\kappa,0)=0$ if and only if $y=1$. By inspection $D_{y}q(\theta,y,\kappa,0)=-1$ everywhere and hence certainly at $(\theta, y, \kappa) = (0,1,\tilde{\kappa})$. The conditions of Lemma~\ref{lemsingpert} are satisfied, and thus there exists $\varepsilon_0 >0$ such that for any fixed $\varepsilon \in (0, \varepsilon_0)$, (\ref{eqlocal4}) undergoes the bifurcation $B$ unfolded transversely by the parameters $\kappa$ at a point $(\theta(\varepsilon), y(\varepsilon), \kappa(\varepsilon))$ with $\theta(0) = 0$, $y(0) = 1$ and $\kappa(0) = \tilde{\kappa}$. The functions $\theta(\varepsilon), y(\varepsilon)$ and $\kappa(\varepsilon)$ can be chosen to be $C^r$ for any finite $r$ (see the proof of Lemma~\ref{lemsingpert}). By reducing $\varepsilon_0$ if necessary, we can ensure that $\kappa(\varepsilon)$, $x(\varepsilon) = \tilde{x} + \Gamma_0\theta(\varepsilon)$ and $y(\varepsilon)$ are all positive, and so the bifurcation $B$ occurring in (\ref{eqglobal4}) takes place on the positive orthant and at positive parameter values.

{\bf Enlargement E5.} We now create $\mathcal{R}'$ by adding $m' \geq 1$ new reversible reactions into $\mathcal{R}$ involving $m'+k$ new chemical species ($k \geq 0$) $\mathsf{Y}_1, \ldots, \mathsf{Y}_{m'+k}$ which we will denote collectively by $\mathsf{Y}$. Let the new reactions be:
\[
a_i\cdot \mathsf{X} +b_i\cdot \mathsf{Y} \rightleftharpoons a_i'\cdot \mathsf{X} + b_i'\cdot \mathsf{Y},\quad (i = 1, \ldots, m')\,.
\]
Here $a_i, a_i', b_i$ and $b_i'$ are nonnegative integer vectors of length $n$, $n$, $m'+k$ and $m'+k$ respectively. Define $a = (a_1| a_2|\cdots|a_{m'}) \in \mathbb{R}^{n \times m'}$, with $a' \in \mathbb{R}^{n \times m'}$, $b \in \mathbb{R}^{(m'+k) \times m'}$ and $b' \in \mathbb{R}^{(m'+k) \times m'}$ defined similarly. Define $\alpha = a'-a \in \mathbb{R}^{n \times m'}$ and $\beta = b'-b \in \mathbb{R}^{(m'+k) \times m'}$. By assumption the new species $\mathsf{Y}$ figure nondegenerately in the enlarged CRN in the sense that $\beta$ has rank $m'$.

Let $y_i$ denote the concentration of $\mathsf{Y}_i$, and define $y:=(y_1, \ldots, y_{m'+k})^{\mathrm{t}}$. Given that $\beta$ has rank $m'$, we can assume without loss of generality (i.e., by reordering the added species $\mathsf{Y}$ if necessary) that $\beta = \left(\begin{array}{c}\hatt{\beta}\\\doublehat{\beta}\end{array}\right)$, where $\hatt{\beta}$ is a nonsingular $m' \times m'$ matrix, and $\doublehat{\beta}$ is a $k \times m'$ matrix. If $k=0$, then $\doublehat{\beta}$ is empty. The vectors $\hatt{y} \in \mathbb{R}^{m'}$, $\doublehat{y} \in \mathbb{R}^k$, $\hatt{b} \in \mathbb{R}^{m'\times m'}$, $\hatt{b}' \in \mathbb{R}^{m'\times m'}$, $\doublehat{b} \in \mathbb{R}^{k\times m'}$ and $\doublehat{b}' \in \mathbb{R}^{k\times m'}$ are defined in the natural way, with $\doublehat{y}$, $\doublehat{b}$ and $\doublehat{b}'$ empty if $k=0$. Conventions on empty matrices and vectors used to cover the case where $k=0$ are given in \cite{banajiCRNosci1}.

We give the new reactions mass action kinetics, and choose rate constants so that these reactions have rate vector 
\[
q(x,y,\varepsilon) := \bm{\varepsilon}^{-\hat{b}^{\mathrm{t}}}\circ x^{a^{\mathrm{t}}}\circ y^{b^{\mathrm{t}}} - \bm{\varepsilon}^{-{(\hat{b}')}^{\mathrm{t}}}\circ x^{(a')^{\mathrm{t}}}\circ y^{{(b')}^{\mathrm{t}}}\,.
\]
Then $\mathcal{R}'$ evolves according to:
\begin{equation}
\label{eqglobal5}
\left(\begin{array}{c}\dot x\\\dot y\end{array}\right) = \left(\begin{array}{cc}\Gamma&\alpha\\0&\beta\end{array}\right)\left(\begin{array}{c}v(x,\kappa)\\q(x,y,\varepsilon)\end{array}\right)\,.
\end{equation}
We define $\delta := -(\doublehat{\beta}\hatt{\beta}^{-1})^{\mathrm{t}}$ and can confirm that $\delta^{\mathrm{t}}\hatt{y}+\doublehat{y}$ is constant along trajectories of (\ref{eqglobal5}) (see \cite{banajiCRNosci1}). We fix $\delta^{\mathrm{t}}\hatt{y}+\doublehat{y} = \mathbf{1}$, and let $\mathcal{S}'$ be the positive stoichiometric class of $\mathcal{R}'$ which includes the point $(x,\hat{y},\doublehat{y}) = (\tilde{x},0,\bm{1})$ in its closure. 

Define $\theta$ by $x = \tilde{x} + \Gamma_0\theta + \alpha\hat{\beta}^{-1}\hat{y}$, and define $w = \varepsilon^{-1}\hat{y}$. For any fixed $\varepsilon > 0$,  $(\theta, w)$ is a local coordinate on $\mathcal{S}'$. Define $\hat{q}(\theta,w, \varepsilon) = q(\tilde{x}  + \Gamma_0\theta + \varepsilon\alpha\hat{\beta}^{-1}w,(\varepsilon w, \mathbf{1}-\varepsilon\delta^{\mathrm{t}}w),\varepsilon)$ for $\varepsilon \neq 0$ sufficiently small. For ease of notation, define $z(\theta):=\tilde{x} + \Gamma_0\theta$. We can write out $\hat{q}(\theta,w,\varepsilon)$ in full:
\[
\hat{q}(\theta,w, \varepsilon) = (z(\theta)+\varepsilon\alpha\hatt{\beta}^{-1}w)^{a^{\mathrm{t}}}\circ w^{\hat{b}^{\mathrm{t}}}\circ(\mathbf{1}-\varepsilon \delta^{\mathrm{t}}w)^{\doublehat{b}^{\mathrm{t}}} - (z(\theta)+\varepsilon\alpha\hatt{\beta}^{-1}w)^{{(a')}^{\mathrm{t}}}\circ w^{{(\hat{b}')}^{\mathrm{t}}}\circ (\mathbf{1}-\varepsilon\delta^{\mathrm{t}}w)^{{(\doublehat{b}')}^{\mathrm{t}}}\,,
\]
and note that $\hat{q}(\theta,w, \varepsilon)$ has a smooth extension to $\varepsilon = 0$, with
\[
\hat{q}(\theta,w,0) = z(\theta)^{a^{\mathrm{t}}}\circ w^{\hat{b}^{\mathrm{t}}} - z(\theta)^{{(a')}^{\mathrm{t}}}\circ w^{{(\hat{b}')}^{\mathrm{t}}}\,.
\]

We thus have, for the evolution of $\theta$, $w$, and $\kappa$,
\begin{equation}
\label{eqlocal5}
\begin{array}{rcl}\dot \theta & = & \Lambda  v(\tilde{x} + \Gamma_0\theta+\varepsilon\alpha\hatt{\beta}^{-1} w,\kappa)\,,\\
\varepsilon\dot {w} & = & \hatt{\beta} \hat{q}(\theta, w, \varepsilon)\,,\\
\dot \kappa & = & 0\,.
\end{array}
\end{equation}
The equation $\hat{q}(\theta,w,0) = 0$ can be solved to get $w = z(\theta)^\gamma$, where $\gamma := -(\alpha\,\hatt{\beta}^{-1})^{\mathrm{t}}$. When we set $\varepsilon=0$ and $\hat{q}(\theta,w,0)=0$, the dynamics of $\theta$ in equation (\ref{eqlocal5}) reduces to that of $\theta$ in (\ref{eqlocal}), and thus (\ref{eqlocal5}) is a singular perturbation of (\ref{eqlocal}) in the sense of Lemma~\ref{lemsingpert}.

We next show that the eigenvalues of $\hat{\beta}D_{w}\hat{q}(\theta,w,0)$ are real and negative provided $z(\theta)$ is positive, which is certainly true for sufficiently small $|\theta|$. 

Defining
\[
T_1(\theta,w,\varepsilon):=(z(\theta)+\varepsilon\alpha\hatt{\beta}^{-1}w)^{a^{\mathrm{t}}}\circ w^{\hat{b}^{\mathrm{t}}}\circ(\mathbf{1}-\varepsilon \delta^{\mathrm{t}}w)^{\doublehat{b}^{\mathrm{t}}} 
\]
and
\[
T_2(\theta,w,\varepsilon):=(z(\theta)+\varepsilon\alpha\hatt{\beta}^{-1}w)^{{(a')}^{\mathrm{t}}}\circ w^{{(\hat{b}')}^{\mathrm{t}}}\circ (\mathbf{1}-\varepsilon\delta^{\mathrm{t}}w)^{{(\doublehat{b}')}^{\mathrm{t}}}
\]
we see that $\hat{q}(\theta,w, \varepsilon) = T_1(\theta,w,\varepsilon) - T_2(\theta,w,\varepsilon)$ and so $\hat{q}(\theta,w, \varepsilon) = 0$ if and only if $T_1(\theta,w, \varepsilon) = T_2(\theta,w, \varepsilon)$. Our goal is to compute
\[
T(\theta):=\hat{\beta}\left.D_w \hat{q}(\tilde{x} + \Gamma_0\theta,w, \varepsilon)\right|_{\varepsilon=0,w=z(\theta)^\gamma}
\]
Recall that $T_1(\theta,w, 0) = T_2(\theta,w,0)$ if and only if $w = z(\theta)^\gamma$. Differentiation gives
\begin{eqnarray*}
T(\theta)&=&\hat{\beta}\left[\mathrm{diag}(T_1(\theta,w,0))\,\,\hat{b}^{\mathrm{t}}\,\,\mathrm{diag}(\bm{1}/w) - \mathrm{diag}(T_2(\theta,w,0))\,\,{(\hat{b}')}^{\mathrm{t}}\,\,\mathrm{diag}(\bm{1}/w)\right]_{w=z(\theta)^\gamma}\\
& = & -\hat{\beta}\left[\mathrm{diag}(T_1(\theta,z(\theta)^\gamma,0))\,\,\hat{\beta}^{\mathrm{t}}\,\,\mathrm{diag}(\bm{1}/z(\theta)^\gamma)\right]\,.
\end{eqnarray*}
Thus whenever $z(\theta)$ is positive, both diagonal matrices in the last expression are well-defined and positive, and hence (as $\hat{\beta}$ is nonsingular) $T(\theta)$ is similar to a negative definite matrix, being the product of a negative definite matrix and a positive diagonal matrix. Consequently, it has eigenvalues which are real and negative.

The conditions of Lemma~\ref{lemsingpert} are satisfied, and thus there exists $\varepsilon_0 >0$ such that for any fixed $\varepsilon \in (0, \varepsilon_0)$, (\ref{eqlocal5}) undergoes the bifurcation $B$ unfolded transversely by the parameters $\kappa$, at a point $(\theta(\varepsilon), w(\varepsilon), \kappa(\varepsilon))$, where $\theta(\varepsilon)$, $w(\varepsilon)$ and $\kappa(\varepsilon)$  can be chosen to be $C^r$ for any finite $r$, and such that $\theta(0) = 0$, $w(0) = \tilde{x}^\gamma$ and $\kappa(0) = \tilde{\kappa}$. By reducing $\varepsilon_0$ if necessary, we can ensure that $\kappa(\varepsilon)$, $x(\varepsilon) = \tilde{x} + \Gamma_0\theta(\varepsilon)+\varepsilon\alpha\hat{\beta}^{-1} w(\varepsilon)$ and $y(\varepsilon) = (\varepsilon w(\varepsilon), \bm{1}-\varepsilon\delta^\mathrm{t}w(\varepsilon))$ are all positive, and so the bifurcation of (\ref{eqglobal5}) occurs on the positive orthant and at positive parameter values.

{\bf Enlargement E6.} The calculations for this enlargement follow \cite{banajisplit}. We enlarge $\mathcal{R}$ to create $\mathcal{R}'$ by splitting some reactions as follows. Assume that $\mathcal{R}$ includes the reactions $a_i \cdot \mathsf{X} \rightarrow b_i \cdot \mathsf{X}\,\, (i=1,\ldots,m')$ for some $m' \geq 1$. Let $k \geq 0$, and let $s_i$ and $\beta_i$ be arbitrary nonnegative vectors of lengths $n$ and $m'+k$ respectively, and let $\mathsf{Y}_1, \ldots, \mathsf{Y}_{m'+k}$ be a list of $m'+k$ new species denoted collectively by $\mathsf{Y}$. Let $\mathcal{R}'$ be a new CRN created from $\mathcal{R}$ by replacing each of the reactions $a_i \cdot \mathsf{X} \rightarrow b_i \cdot \mathsf{X}$ with a chain
\[
a_i \cdot \mathsf{X} \rightarrow s_i \cdot \mathsf{X} + \beta_i\cdot \mathsf{Y} \rightarrow b_i \cdot \mathsf{X},\,\,(i=1,\ldots,m')\,.
\]
By assumption, $\beta : = (\beta_1|\beta_2|\cdots|\beta_{m'})$ has rank $m'$. 

Define $s:=(s_1|s_2|\cdots|s_{m'})$. Define $\hat{\beta},\doublehat{\beta},\hat{y}$ and $\doublehat{y}$ as in the case of enlargement E5. We give the reactions with new reactant complexes mass action kinetics, and choose rate constants so that the reactions have rate vector $\bm{\varepsilon}^{-\hatt{\beta}^\mathrm{t}} \circ x^{s^\mathrm{t}} \circ y^{\beta^\mathrm{t}}$. Let the $i$th reaction to be split be reaction $j_i$, and define $\underline{v}(x,\kappa) := (v_{j_1}(x,\kappa), \ldots, v_{j_{m'}}(x,\kappa))^\mathrm{t}$ to be the vector of reaction rates associated with the reactions to be split. Define
\[
q(x,y,\kappa,\varepsilon) := \underline{v}(x,\kappa) - \bm{\varepsilon}^{-\hatt{\beta}^\mathrm{t}} \circ x^{s^\mathrm{t}} \circ y^{\beta^\mathrm{t}}\,.
\]
The assumptions so far give us that the dynamics of $\mathcal{R}'$ is governed by
\begin{equation}
\label{eqglobal6}
\left(\begin{array}{c}\dot x\\\dot y\end{array}\right) = \left(\begin{array}{cc}\Gamma&\alpha\\0&\beta\end{array}\right)\left(\begin{array}{c}v(x,\kappa)\\q(x,y,\kappa,\varepsilon)\end{array}\right)\,.
\end{equation}
Here we have defined $\alpha_i := s_i - b_i$, $(i = 1, \ldots, m')$ and $\alpha := (\alpha_1|\alpha_2|\cdots |\alpha_{m'})$. 

We define $\delta := -(\doublehat{\beta}\hatt{\beta}^{-1})^{\mathrm{t}}$ and can confirm that $\delta^{\mathrm{t}}\hatt{y}+\doublehat{y}$ is constant along trajectories of (\ref{eqglobal6}). We fix $\delta^{\mathrm{t}}\hatt{y}+\doublehat{y} = \mathbf{1}$. 

Define $\theta$ by $x = \tilde{x} + \Gamma_0\theta  + \alpha\hat{\beta}^{-1}\hat{y}$, and define $w = \varepsilon^{-1}\hat{y}$. Then $(\theta, w)$ can be regarded as a local coordinate on the positive stoichiometric class of $\mathcal{R}'$ which includes the point $(x,\hat{y},\doublehat{y}) = (\tilde{x},0,\bm{1})$ in its closure. Define $\hat{q}(\theta,w, \kappa, \varepsilon) = q(\tilde{x} + \Gamma_0\theta + \varepsilon\alpha\hat{\beta}^{-1}w,(\varepsilon w, \mathbf{1}-\varepsilon\delta^{\mathrm{t}}w),\kappa, \varepsilon)$ for $\varepsilon > 0$ sufficiently small. We can write out $\hat{q}(\theta,w,\kappa, \varepsilon)$ in full:
\[
\hat{q}(\theta,w,\kappa,\varepsilon) := \underline{v}(\tilde{x} + \Gamma_0\theta+\varepsilon\alpha\hatt{\beta}^{-1} w,\kappa) - (\tilde{x} + \Gamma_0\theta+\varepsilon\alpha\hatt{\beta}^{-1} w)^{s^\mathrm{t}} \circ w^{\hat{\beta}^\mathrm{t}}\circ (\mathbf{1}-\varepsilon\delta^{\mathrm{t}} w)^{\doublehat{\beta}^\mathrm{t}}\,,
\]
and note that $\hat{q}(\theta,w, \kappa, \varepsilon)$ has a smooth extension to $\varepsilon = 0$, with
\[
\hat{q}(\theta,w,\kappa,0) := \underline{v}(\tilde{x} + \Gamma_0\theta,\kappa) - (\tilde{x} + \Gamma_0\theta)^{s^\mathrm{t}} \circ w^{\hat{\beta}^\mathrm{t}}\,.
\]
We then have, for the evolution of $\theta$, $w$, and $\kappa$ 
\begin{equation}
\label{eqlocal6}
\begin{array}{rcl}\dot \theta & = & \Lambda  v(\tilde{x} + \Gamma_0\theta+\varepsilon\alpha\hatt{\beta}^{-1} w,\kappa)\,,\\
\varepsilon\dot {w} & = & \hatt{\beta} \hat{q}(\theta,w,\kappa,\varepsilon)\,,\\
\dot \kappa & = & 0\,.
\end{array}
\end{equation}
The equation $\hat{q}(\theta,w,\kappa,0) = 0$ can be solved to get 
\[
w = V(\tilde{x} + \Gamma_0\theta,\kappa)\circ (\tilde{x} + \Gamma_0\theta)^\gamma
\]
where $V(z,\kappa) = \underline{v}(z,\kappa)^{(\hatt{\beta}^{-1})^{\mathrm{t}}}$ and $\gamma = -(s\hatt{\beta}^{-1})^{\mathrm{t}}$. 

When we set $\varepsilon=0$ and $\hat{q}(\theta,w,\kappa,0)=0$, the dynamics of $\theta$ in equation (\ref{eqlocal6}) reduces to that of $\theta$ in (\ref{eqlocal}), and thus (\ref{eqlocal6}) is a singular perturbation of (\ref{eqlocal}) in the sense of Lemma~\ref{lemsingpert}.

By calculations to follow, adapted from \cite{banajisplit}, the eigenvalues of $D_{w}\hat{q}(\theta,w,\kappa,0)$ are real and negative on the zero set of $\hat{q}(\theta,w,\kappa,0)$ provided $\tilde{x} + \Gamma_0\theta$ is positive, which is true for sufficiently small $|\theta|$.

For ease of notation, define $z(\theta) := \tilde{x} + \Gamma_0\theta$, and $T(\theta) := \left.D_{w}\hat{q}(\theta,w,\kappa,\varepsilon)\right|_{\varepsilon=0,w=V(z(\theta),\kappa)\circ z(\theta)^\gamma}$. Then
\begin{eqnarray*}
T(\theta) &=& -\left.\hatt{\beta}\mathrm{diag}(w^{\hatt{\beta}^\mathrm{t}}\circ z(\theta)^{s^\mathrm{t}})\hatt{\beta}^\mathrm{t}\mathrm{diag}(\mathbf{1}/w)\right|_{w=V(z(\theta),\kappa)\circ z(\theta)^\gamma}\\
& = & -\hatt{\beta}\mathrm{diag}(\underline{v}(z(\theta),\kappa))\hatt{\beta}^\mathrm{t}\mathrm{diag}(\mathbf{1}/(V(z(\theta),\kappa)\circ z(\theta)^\gamma))\,.
\end{eqnarray*}
Provided $z(\theta)$ is positive, both diagonal matrices in the last expression are well-defined and positive, and hence (as $\hatt{\beta}$ is nonsingular) $T(\theta)$ is similar to a negative definite matrix (see \cite{banajisplit}). Consequently, it has eigenvalues which are real and negative. 

The conditions of Lemma~\ref{lemsingpert} are satisfied, and thus there exists $\varepsilon_0 >0$ such that for any fixed $\varepsilon \in (0, \varepsilon_0)$, (\ref{eqlocal6}) undergoes the bifurcation $B$ unfolded transversely by the parameters $\kappa$, at a point $(\theta(\varepsilon), w(\varepsilon), \kappa(\varepsilon))$, where $\theta(\varepsilon), w(\varepsilon)$ and $\kappa(\varepsilon)$ can be chosen to be $C^r$ for any finite $r$, and such that $\theta(0) = 0$, $w(0) = V(\tilde{x},\tilde{\kappa})\circ\tilde{x}^\gamma$ and $\kappa(0) = \tilde{\kappa}$. By reducing $\varepsilon_0$ if necessary, we can ensure that $\kappa(\varepsilon)$, $x(\varepsilon) = \tilde{x} + \Gamma_0\theta(\varepsilon)+\varepsilon\alpha\hat{\beta}^{-1} w(\varepsilon)$ and $y(\varepsilon) = (\varepsilon w(\varepsilon), \bm{1}-\varepsilon\delta^\mathrm{t}w(\varepsilon))$ are all positive, and so the bifurcation $B$ in (\ref{eqglobal6}) occurs on the positive orthant and at positive parameter values. 

This completes the proof of the theorem. \hfill
\end{myproof}

\begin{remark}[Relationships to other work]
  The content of Theorem~\ref{thmbifinherit} was foreshadowed in the conclusions of \cite{banajisplit}, and many of the details in the proof of Theorem~\ref{thmbifinherit} follow related results in \cite{banajiCRNosci,banajiCRNosci1,banajiboroshofbauer,banajisplit}. The proof of the inheritance of bifurcations result in the case of enlargement E1 was sketched in Remark~4.3 of \cite{banajiCRNosci}. The proof in the case of enlargement E3 was sketched in Remark~6 of \cite{banajiboroshofbauer}. The approach to enlargement E5 is simplified from the original approach in \cite{banajiCRNosci1}, and indeed implies a somewhat simpler proof of the main result of \cite{banajiCRNosci1}. Theorem~\ref{thmbifinherit} answers questions about bifurcations posed by Conradi and Shiu in \cite{conradishiubiophys}, and also implies some of the observations on Andronov--Hopf bifurcations in various models of a biologically important network in Obatake {\em et al.} \cite{obatake}. In \cite{ConradiMincheva2023}, Conradi and Mincheva use inheritance results to show that oscillations occur in a mass action system as a consequence of an Andronov--Hopf bifurcation in a subnetwork: the results here allow this claim to be strengthened, with an Andronov--Hopf bifurcation guaranteed, by Theorem~\ref{thmbifinherit}, in the larger system.
\end{remark}

\begin{remark}[Topological equivalence and normal forms]
  In the formulation and proof of Theorem~\ref{thmbifinherit} we make no assumptions about whether the enlarged systems admit topologically equivalent bifurcation diagrams to the original systems, even in the case of generic bifurcations. Indeed, as is well-known, for certain bifurcations such as the fold-Hopf bifurcation and the Hopf-Hopf bifurcation, the dynamics of the truncated normal form does not capture the dynamics of the normal form with generic higher order terms \cite[Sections 8.5~and~8.6]{kuznetsov:2023}. The only claim we make here is that those behaviours which can be inferred near the bifurcation point based on finitely many smooth conditions on Taylor coefficients of the original system (up to some arbitrary but finite order), survive in the enlarged system.
\end{remark}

When we restrict attention to fully open networks, we obtain an immediate, but potentially useful, corollary to Theorem~\ref{thmbifinherit}. Given CRNs $\mathcal{R}$ and $\mathcal{R}'$, we say that $\mathcal{R}$ is an {\em induced subnetwork} of $\mathcal{R}'$ \cite{banajipanteaMPNE} if $\mathcal{R}$ can be obtained from $\mathcal{R}'$ by removing some subset of the reactions of $\mathcal{R}'$, and/or deleting some species of $\mathcal{R}'$ from every reaction in which they occur.

\begin{cor}[Fully open CRNs and the induced subnetwork partial ordering]
\label{corfully}
Given fully open CRNs $\mathcal{R}$ and $\mathcal{R}'$, with $\mathcal{R}$ an induced subnetwork of $\mathcal{R}'$, if $\mathcal{R}$ admits some bifurcation with mass action kinetics, unfolded transversely by its rate constants, then so does $\mathcal{R}'$. 
\end{cor}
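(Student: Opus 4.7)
The plan is to exhibit $\mathcal{R}'$ as the result of finitely many elementary enlargements applied to $\mathcal{R}$, and then invoke Theorem~\ref{thmbifinherit}. The key simplification coming from the assumption that both networks are fully open is that in any fully open CRN the reaction vectors already span the whole ambient space, so the rank equals the number of species, and any further reaction involving only the existing species is automatically linearly dependent on those already present.

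Let $\mathsf{Y}_1,\ldots,\mathsf{Y}_p$ enumerate the species of $\mathcal{R}'$ not present in $\mathcal{R}$. I would build $\mathcal{R}'$ in two phases. First, apply enlargement E4 once for each $\mathsf{Y}_j$ in turn: at step $j$, add $\mathsf{Y}_j$ into exactly those reactions of the current network that correspond, after the previous $j-1$ applications, to reactions of $\mathcal{R}'$ containing $\mathsf{Y}_j$, using the same stoichiometries as in $\mathcal{R}'$. Since $\mathcal{R}'$ is fully open, the inflow $\mathsf{0}\to\mathsf{Y}_j$ and outflow $\mathsf{Y}_j\to\mathsf{0}$ belong to $\mathcal{R}'$, and these match precisely the flow reactions that E4 automatically appends. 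After this phase every species of $\mathcal{R}'$ is present and the network is itself fully open. In the second phase, use E1 to introduce, one at a time, each reaction of $\mathcal{R}'$ not yet in the network; the rank-preservation requirement of E1 holds trivially by the observation above. The resulting network is $\mathcal{R}'$, and Theorem~\ref{thmbifinherit} then yields the conclusion.

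The main obstacle I expect is combinatorial bookkeeping rather than analytic content: the deletion of species could, in principle, collapse several distinct reactions of $\mathcal{R}'$ into a single reaction of $\mathcal{R}$. To handle this, before entering the E4 phase I would first use E1 to introduce the requisite number of duplicate copies of any such reaction of $\mathcal{R}$ (this is permitted since the reaction vector being added coincides with one already present, so the rank is trivially preserved), and each duplicate can then be modified independently via a later application of E4 into a distinct reaction of $\mathcal{R}'$. Beyond this bookkeeping, the proof reduces to verifying that the hypotheses of E1 and E4 are met at each step, which is immediate from the fully open hypothesis.
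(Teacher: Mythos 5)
Your construction is essentially the paper's proof: the paper also reduces the corollary to exhibiting $\mathcal{R}'$ as obtained from $\mathcal{R}$ by a sequence of enlargements E1 and E4 and then invokes Theorem~\ref{thmbifinherit} (it simply cites the proof of Corollary~4.2 of \cite{banajipanteaMPNE} for this decomposition, whereas you spell it out), and your two observations — that the fully open hypothesis on $\mathcal{R}'$ makes the E4 flow reactions legitimate, and that full openness forces full rank so the E1 rank condition is automatic — are exactly the points that make the decomposition work. The one step I would not accept as written is the ``duplicate copies via E1'' device: a CRN is a set of reactions and E1 adds a \emph{new} reaction, so introducing an exact copy of an existing reaction is not an instance of E1 (and the subsequent instruction to apply E4 to one copy but not the other is then not well defined). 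Fortunately this device is unnecessary: if several reactions of $\mathcal{R}'$ collapse to a single reaction of $\mathcal{R}$, simply fix for each reaction of $\mathcal{R}$ one chosen preimage in $\mathcal{R}'$ and steer it to that target during the E4 phase; after that phase every species of $\mathcal{R}'$ is present and the intermediate network is fully open, hence of full rank, so all remaining reactions of $\mathcal{R}'$ — including the other members of any collapsed family — are added in your E1 phase exactly as you already do for the rest.
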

\begin{proof}
The assumptions that $\mathcal{R}$ and $\mathcal{R}'$ are fully open with $\mathcal{R}$ an induced subnetwork of $\mathcal{R}'$ imply that $\mathcal{R}'$ can be constructed from $\mathcal{R}$ via a sequence of enlargements of the form E1 and E4 (see the proof of Corollary~4.2 in \cite{banajipanteaMPNE}). The result is now immediate from Theorem~\ref{thmbifinherit}.
\end{proof}

\subsection{A worked example illustrating all six enlargements}
\label{secsimplefold}

The purpose of this section is to illustrate the terminology and calculations in the proof of Theorem~\ref{thmbifinherit} in a concrete setting, using the simplest bifurcation, namely a fold bifurcation. Examples of applying Theorem~\ref{thmbifinherit} to a variety of other bifurcations are presented in Section~\ref{secex}.

Consider the following rank $1$ mass action system on two chemical species $\mathsf{X}_1$ and $\mathsf{X}_2$:
\begin{equation}
\tag{$\mathcal{R}_0$}
\mathsf{X}_1+2\mathsf{X}_2\overset{1}{\longrightarrow} 3\mathsf{X}_2,\quad \mathsf{X}_2 \overset{\kappa}{\longrightarrow} \mathsf{X}_1\,\,.
\end{equation}
One rate constant has been fixed at $1$, while the other, $\kappa$, is allowed to vary. The network has been chosen because it displays a nondegenerate fold bifurcation as $\kappa$ is varied. We can use this network to illustrate the six enlargements, each giving rise to a larger network which must, by Theorem~\ref{thmbifinherit}, display a nondegenerate fold bifurcation. Denoting the concentration of $\mathsf{X}_i$ by $x_i$ ($i=1,2$), $\mathcal{R}_0$ gives rise to the system of ODEs
\[
\left(\begin{array}{c}
\dot x_1\\
\dot x_2
\end{array}
\right) 
=
\left(\begin{array}{c}
-x_1x_2^2 + \kappa x_2\\
x_1x_2^2 -\kappa x_2\,
\end{array}\right)
\,\, = \,\,\left(\begin{array}{rr}-1&1\\1&-1\end{array}\right)\left(\begin{array}{c}x_1x_2^2\\\kappa x_2\end{array}\right)\,.
\]
Let us focus on the positive stoichiometric class $\mathcal{S}_0=\{(x_1,x_2) \in \mathbb{R}^2_+\colon x_1+x_2=2\}$. We define a local coordinate $\theta$ on $\mathcal{S}_0$ via 
\[
\left(\begin{array}{c}x_1\\x_2\end{array}\right) = \left(\begin{array}{c}1\\1\end{array}\right) + \left(\begin{array}{r}-1\\1\end{array}\right)\theta\,.
\]
Here $(1,1)^\mathrm{t}$ corresponds to the initial point $\tilde{x}$ in the proof of Theorem~\ref{thmbifinherit}, and $(-1,1)^\mathrm{t}$ is the matrix $\Gamma_0$. The coordinate $\theta$ evolves according to
\[
\dot \theta = (1+\theta)(1-\kappa-\theta^2) =: f(\theta, \kappa)\,.
\]
Later, we may choose to augment this equation with the trivial equation $\dot \kappa = 0$. The domain of $\theta$, denoted $V_\theta$, may be taken to be the interval $(-1,1)$, ensuring that $(x_1, x_2)^\mathrm{t}$ remains positive, and the domain of $\kappa$, denoted $V_\kappa$, is $\mathbb{R}_+$. The system has a nondegenerate fold bifurcation at $(\theta,\kappa)=(0,1)$, and as $\kappa$ decreases through $1$, a pair of equilibria are born close to $\theta = 0$. We may consider the bifurcation to be defined in a neighbourhood of $(0,1)$ via the conditions: $f(\theta,\kappa)=0$, $f_\theta(\theta,\kappa)=0$, and the nondegeneracy and transversality conditions $f_{\theta\theta}(\theta,\kappa)<0$ and $f_{\kappa}(\theta,\kappa)<0$. Here, the condition $f_\theta = 0$ corresponds to the main bifurcation condition B2 in Section~\ref{secbif}. The conditions $f=0,f_\theta=0,f_{\theta\theta}<0$ define a submanifold in the space of $2$-jets $J^2(V_x, \mathbb{R}) \cong V_x \times \mathbb{R}^3$ corresponding to the fold bifurcation. The transversality condition B3 is that $(\theta,\kappa)$ must be a regular point of $(f,f_\theta)\colon V_\theta \times V_\kappa \to \mathbb{R}^2$. However, given $f_\theta(\theta,\kappa)= 0$ and $f_{\theta\theta}(\theta,\kappa)\neq 0$, the condition that $f_{\kappa}(\theta,\kappa)\neq 0$ is clearly equivalent to regularity of $(f,f_\theta)$ at $(\theta,\kappa)$, and consequently $f_{\kappa}(\theta,\kappa) < 0$ may be taken as the transversality condition in a neighbourhood of $(0,1)$.

{\bf Enlargement 1: a linearly dependent reaction.} Let us add to $\mathcal{R}_0$ the linearly dependent reaction $\mathsf{X}_1+\mathsf{X}_2 \rightarrow 2\mathsf{X}_2$ with rate constant $\varepsilon$ to obtain
\begin{equation}
\tag{$\mathcal{R}_1$}
\mathsf{X}_1+2\mathsf{X}_2\overset{1}{\longrightarrow} 3\mathsf{X}_2,\quad \mathsf{X}_2 \overset{\kappa}{\longrightarrow} \mathsf{X}_1\,,\quad \mathsf{X}_1+\mathsf{X}_2 \overset{\varepsilon}{\longrightarrow} 2\mathsf{X}_2\,.
\end{equation}
The enlarged system evolves according to
\[
\left(\begin{array}{c}
\dot x_1\\
\dot x_2
\end{array}
\right) 
\,\, = \,\,\left(\begin{array}{rrr}-1&1&-1\\1&-1&1\end{array}\right)\left(\begin{array}{c}x_1x_2^2\\\kappa x_2\\\varepsilon x_1x_2\end{array}\right)\,.
\]
The variable $\theta$ defined as above, remains a local coordinate on $\mathcal{S}_0$ (which is still a positive stoichiometric class of the enlarged system), and now evolves according to:
\[
\dot \theta = (1+\theta)(1-\kappa-\theta^2) +\varepsilon (1-\theta^2) =: \hat{f}(\theta, \kappa, \varepsilon)\,.
\]
The function $\hat{f}$ is smooth and $\hat{f}(\theta, \kappa, 0) = f(\theta, \kappa)$. According to Theorem~\ref{thmbifinherit}, for any sufficiently small $\varepsilon > 0$, there exists $(\theta(\varepsilon), \kappa(\varepsilon)) \in V_\theta \times V_\kappa$, close to $(0,1)$, and such that $\mathcal{R}_1$ has a fold bifurcation, unfolded transversely by $\kappa$, at $(x_1(\varepsilon), x_2(\varepsilon), \kappa(\varepsilon))$, where $x_1(\varepsilon) = 1-\theta(\varepsilon)$, $x_2(\varepsilon) = 1+ \theta(\varepsilon)$. 

{\bf Enlargement 2: the fully open extension.} Let us now modify $\mathcal{R}_0$ by including inflows and outflows for all the species involved, to get:
\begin{equation}
\tag{$\mathcal{R}_2$}
\mathsf{X}_1+2\mathsf{X}_2\overset{1}{\longrightarrow} 3\mathsf{X}_2,\quad \mathsf{X}_2 \overset{\kappa}{\longrightarrow} \mathsf{X}_1 , \quad \mathsf{0}\overset{\varepsilon}{\underset{\varepsilon}{\rightleftharpoons}} \mathsf{X}_1, \quad \mathsf{0}\overset{\varepsilon}{\underset{\varepsilon}{\rightleftharpoons}} \mathsf{X}_2\,.
\end{equation}
The rate constants of the flow reactions have been chosen to all be equal to $\varepsilon$. The enlarged network gives rise to the ODE
\[
\left(\begin{array}{c}
\dot x_1 \\
\dot x_2
\end{array}
\right) 
=
\left(\begin{array}{rrrr}-1&1&1&0\\1&-1&0&1\end{array}\right)\left(\begin{array}{c}x_1x_2^2\\\kappa x_2\\\varepsilon(1-x_1)\\\varepsilon(1-x_2)\end{array}\right)\,.
\]
We can confirm that $\mathcal{S}_0 := \{(x_1,x_2) \in \mathbb{R}^2_{+}\colon x_1+x_2=2\}$ remains locally invariant for any choice of $\varepsilon > 0$, although it is no longer a positive stoichiometric class for $\mathcal{R}_2$ as the unique positive stoichiometric class is now $\mathbb{R}^2_{+}$. Defining $\theta$ as a local coordinate on $\mathcal{S}_0$ as before we have, for the dynamics on $\mathcal{S}_0$,
\[
\dot \theta = (1+\theta)(1-\kappa-\theta^2) -\varepsilon \theta := \hat{f}(\theta, \kappa, \varepsilon)\,.
\]
The function $\hat{f}$ is smooth, and $\hat{f}(\theta, \kappa, 0) = f(\theta, \kappa)$. According to Theorem~\ref{thmbifinherit}, for any sufficiently small $\varepsilon > 0$, there exists $(\theta(\varepsilon), \kappa(\varepsilon)) \in V_\theta \times V_\kappa$, close to $(0,1)$, and such that restricting attention to $\mathcal{S}_0$, $\mathcal{R}_2$ has a fold bifurcation, unfolded transversely by $\kappa$, at $(x_1(\varepsilon), x_2(\varepsilon), \kappa(\varepsilon))$, where $x_1(\varepsilon) = 1-\theta(\varepsilon)$, $x_2(\varepsilon) = 1+ \theta(\varepsilon)$. Moreover, the eigenvalue at $(x_1(\varepsilon), x_2(\varepsilon))$ associated with directions transverse to $\mathcal{S}_0$ is just $-\varepsilon$.

{\bf Enlargement 3: a linearly dependent species.} Let us now modify $\mathcal{R}_0$ by including a new species in some reactions of $\mathcal{R}_0$ without altering the rank of the network, to obtain
\begin{equation}
\tag{$\mathcal{R}_3$}
\mathsf{X}_1+2\mathsf{X}_2\overset{1}{\longrightarrow} 3\mathsf{X}_2+\mathsf{Y},\quad \mathsf{X}_2 +\mathsf{Y} \overset{\varepsilon\kappa}{\longrightarrow} \mathsf{X}_1\,.
\end{equation}
The enlarged network gives rise to the ODE
\[
\left(\begin{array}{c}
\dot x_1 \\
\dot x_2\\
\dot y
\end{array}
\right) 
=
\left(\begin{array}{rr}-1&1\\1&-1\\1&-1\end{array}\right)\left(\begin{array}{c}x_1x_2^2\\\varepsilon\kappa x_2y\end{array}\right)\,.
\]
Note that $y-x_2$ is constant along trajectories, and we set the value of this constant to be $\varepsilon^{-1}$. In other words, we focus attention on the positive stoichiometric class 
\[
\mathcal{S}_\varepsilon:=\{(x_1,x_2,y) \in \mathbb{R}^3_+\,:\, x_1+x_2=2, \,\,y-x_2 = \varepsilon^{-1}\}\,
\]
of $\mathcal{R}_3$. A local coordinate $\theta$ on $\mathcal{S}_\varepsilon$, defined in the usual way, evolves according to
\[
\dot \theta = (1+\theta)(1-\kappa-\theta^2) - \varepsilon \kappa (1+\theta)^2  =: \hat{f}(\theta, \kappa, \varepsilon)\,.
\]
Note that $\hat{f}$ is smooth and $\hat{f}(\theta, \kappa, 0) = f(\theta, \kappa)$. According to Theorem~\ref{thmbifinherit}, for any sufficiently small $\varepsilon > 0$, there exists $(\theta(\varepsilon), \kappa(\varepsilon)) \in V_\theta \times V_\kappa$, close to $(0,1)$, and such that $\mathcal{R}_3$ has a fold bifurcation, unfolded transversely by $\kappa$, on the positive stoichiometric class $\mathcal{S}_\varepsilon$ at the point $(x_1(\varepsilon), x_2(\varepsilon), y(\varepsilon), \kappa(\varepsilon))$, where $x_1(\varepsilon) = 1-\theta(\varepsilon)$, $x_2(\varepsilon) = 1+ \theta(\varepsilon)$ and $y(\varepsilon) = 1+ \theta(\varepsilon) + \varepsilon^{-1}$. 

{\bf Enlargement 4: a new species with inflows and outflows.} Let us now modify $\mathcal{R}_0$ by including a new species $\mathsf{Y}$ in one of its reactions, and also including inflows and outflows for $\mathsf{Y}$, to obtain
\begin{equation}
\tag{$\mathcal{R}_4$}
\mathsf{X}_1+2\mathsf{X}_2\overset{1}{\longrightarrow} 3\mathsf{X}_2,\quad \mathsf{X}_2 +\mathsf{Y} \overset{\kappa}{\longrightarrow} \mathsf{X}_1 , \quad \mathsf{0}\overset{\varepsilon^{-1}}{\underset{\varepsilon^{-1}}{\rightleftharpoons}} \mathsf{Y}\,,
\end{equation}
giving rise to the singularly perturbed ODE
\begin{equation}
\label{eqinout0}
\left(\begin{array}{c}
\dot x_1 \\
\dot x_2 \\
\varepsilon \dot y
\end{array}
\right) 
=
\left(\begin{array}{rrr}-1&1&0\\1&-1&0\\0&-\varepsilon&1\end{array}\right)\left(\begin{array}{c}x_1x_2^2\\\kappa x_2y\\1-y\end{array}\right)\,.
\end{equation}
We focus attention on the 2D, positive stoichiometric class of $\mathcal{R}_4$ defined by
\[
\mathcal{S}' := \{(x_1,x_2,y) \in \mathbb{R}^3_{+}\colon x_1+x_2 = 2\}\,.
\]
In local coordinates $(\theta, y)$ on $\mathcal{S}'$, we have the system
\begin{equation}
\label{eqinout}
\begin{array}{rcl}
\dot \theta &=& (1+\theta)(1-\kappa y-\theta^2)\,,\\
\varepsilon \dot y & = & -\varepsilon \kappa (1+\theta) y + (1-y)\,,\\
\dot \kappa & = & 0\,.\end{array}
\end{equation}
For sufficiently small $\varepsilon > 0$, the system has a locally invariant manifold $\mathcal{E}_\varepsilon$ close, on compact sets, to $\mathcal{E}_0 = \{(\theta,y,\kappa)\,\colon\, y=1\}$. In particular, there exist $\varepsilon_1 > 0$, open neighbourhoods $U_\theta \subseteq V_\theta$, $U_\kappa \subseteq \mathbb{R}_+$ with $0 \in U_\theta$, $1 \in U_\kappa$, and a function $\psi\colon U_\theta \times U_\kappa \times (-\varepsilon_1,\varepsilon_1) \to \mathbb{R}$, of any desired finite level of differentiability, satisfying $\psi(\theta,\kappa,0) = 1$, and such that $\mathcal{E}_\varepsilon:=\{(\theta,y,\kappa)\colon (\theta,\kappa) \in U_\theta \times U_\kappa, y = \psi(\theta,\kappa,\varepsilon)\}$ is, for $\varepsilon \in (-\varepsilon_1, \varepsilon_1)\backslash\{0\}$, a locally invariant manifold of (\ref{eqinout}). Restricting attention to $\mathcal{E}_\varepsilon$, we get
\begin{equation}
\label{eqinout1}
\dot \theta = (1+\theta)(1-\kappa \psi(\theta,\kappa,\varepsilon)-\theta^2) =: \hat{f}(\theta, \kappa, \varepsilon)\,.
\end{equation}
Observe that $\hat{f}$ is as differentiable as $\psi$, and that $\hat{f}(\theta, \kappa, 0) = f(\theta, \kappa)$. According to the proof of Theorem~\ref{thmbifinherit}, there exists $\varepsilon_0 \in (0, \varepsilon_1]$ such that whenever $\varepsilon \in (0,\varepsilon_0)$, there exists $(\theta(\varepsilon), \kappa(\varepsilon)) \in V_\theta \times V_\kappa$, close to $(0,1)$, and such that, restricting attention to $\mathcal{E}_\varepsilon$, (\ref{eqinout1}) has a fold bifurcation, unfolded transversely by $\kappa$ at $(\theta(\varepsilon),\kappa(\varepsilon))$. Moreover the eigenvalue of (\ref{eqinout}) at the bifurcating equilibrium transverse to $\mathcal{E}_\varepsilon$ is real and negative. 

Finally, we observe that, reducing $\varepsilon_0$ if necessary, for each $\varepsilon \in (0, \varepsilon_0)$, $\kappa(\varepsilon)$, $x_1(\varepsilon) = 1-\theta(\varepsilon)$, $x_2(\varepsilon) = 1+ \theta(\varepsilon)$ and $y(\varepsilon) = \psi(\theta(\varepsilon),\kappa(\varepsilon),\varepsilon)$ are all positive, and so the bifurcation takes place at a positive parameter value, and the bifurcating equilibrium is a positive equilibrium of (\ref{eqinout0}).

{\bf Enlargement 5: new reversible reactions involving new species.} Let us now add to $\mathcal{R}_0$ a new reversible reaction $\mathsf{Y}_1+\mathsf{X}_1\rightleftharpoons 2\mathsf{Y}_2$, involving two new species $\mathsf{Y}_1$ and $\mathsf{Y}_2$ to obtain:
\begin{equation}
\tag{$\mathcal{R}_5$}
\mathsf{X}_1+2\mathsf{X}_2\overset{1}{\longrightarrow} 3\mathsf{X}_2,\quad \mathsf{X}_2 \overset{\kappa}{\longrightarrow} \mathsf{X}_1 , \quad \mathsf{Y}_1+\mathsf{X}_1 \overset{\varepsilon^{-1}}{\underset{1}{\rightleftharpoons}} 2\mathsf{Y}_2\,,
\end{equation}
giving rise to the ODE
\begin{equation}
\label{eqrev0}
\left(\begin{array}{c}
\dot x_1 \\
\dot x_2 \\
\dot y_1 \\
\dot y_2
\end{array}
\right) 
=
\left(\begin{array}{rrr}-1&1&-1\\1&-1&0\\0&0&-1\\0&0&2\end{array}\right)\left(\begin{array}{c}x_1x_2^2\\\kappa x_2\\\varepsilon^{-1}y_1x_1 -y_2^2\end{array}\right)\,.
\end{equation}
We focus attention on the 2D, positive stoichiometric class of $\mathcal{R}_5$ defined by
\[
\mathcal{S}' = \{(x_1,x_2,y_1,y_2) \in \mathbb{R}^4_{+}\colon x_1+x_2-y_1 = 2,\, 2y_1+y_2=1\}\,.
\]
Define $\theta$ by 
\[
\left(\begin{array}{c}x_1-y_1\\x_2\end{array}\right) = \left(\begin{array}{c}1-\theta\\1+\theta\end{array}\right)\,,
\]
and define $w = \varepsilon^{-1}y_1$. For any fixed $\varepsilon > 0$,  $(\theta, w)$ is a local coordinate on $\mathcal{S}'$. We get, in these coordinates,
\begin{equation}
\label{eqrev}
\begin{array}{rcl}
\dot \theta & = & (1+\theta)(1-\kappa-\theta^2 + \varepsilon w(1+\theta))\,,\\
\varepsilon \dot w & = & 1-w(1-\theta)-\varepsilon (w^2+4w - 4\varepsilon w^2)\,,\\
\dot \kappa & = & 0\,.\end{array}
\end{equation}
For sufficiently small $\varepsilon > 0$, the system has a locally invariant manifold $\mathcal{E}_\varepsilon$ close to 
\[
\mathcal{E}_0 = \{(\theta,w,\kappa)\,\colon\, w=\frac{1}{1-\theta}\}\,.
\] 
In particular, there exist $\varepsilon_1 > 0$, open neighbourhoods $U_\theta \subseteq V_\theta$, $U_\kappa \subseteq \mathbb{R}_+$ with $0 \in U_\theta$, $1 \in U_\kappa$, and a function $\psi\colon U_\theta \times U_\kappa \times (-\varepsilon_1,\varepsilon_1) \to \mathbb{R}$, of any desired finite level of differentiability and satisfying $\psi(\theta,\kappa,0) = \frac{1}{1-\theta}$, and such that $\mathcal{E}_\varepsilon:=\{(\theta,w,\kappa)\colon (\theta,\kappa) \in U_\theta \times U_\kappa, w = \psi(\theta,\kappa,\varepsilon)\}$ is, for $\varepsilon \in (-\varepsilon_1, \varepsilon_1)\backslash\{0\}$, a locally invariant manifold of (\ref{eqrev}). Restricting attention to $\mathcal{E}_\varepsilon$, we get
\begin{equation}
\label{eqrev1}
\dot \theta = (1+\theta)(1-\kappa-\theta^2 + \varepsilon(1+\theta)\psi(\theta,\kappa,\varepsilon)):=\hat{f}(\theta,\kappa,\varepsilon)\,.
\end{equation}
Observe that $\hat{f}$ is as differentiable as $\psi$, and that $\hat{f}(\theta, \kappa, 0) = f(\theta, \kappa)$. According to the proof of Theorem~\ref{thmbifinherit}, there exists $\varepsilon_0 \in (0, \varepsilon_1]$ such that whenever $\varepsilon \in (0,\varepsilon_0)$ there exists $(\theta(\varepsilon), \kappa(\varepsilon)) \in V_\theta \times V_\kappa$, close to $(0,1)$, and such that, restricting attention to $\mathcal{E}_\varepsilon$, (\ref{eqrev1}) has a fold bifurcation, unfolded transversely by $\kappa$ at $(\theta(\varepsilon),\kappa(\varepsilon))$. Moreover the eigenvalue of (\ref{eqrev}) at the bifurcating equilibrium transverse to $\mathcal{E}_\varepsilon$ is real and negative. 

Finally, we observe that, reducing $\varepsilon_0$ if necessary, if $\varepsilon \in (0, \varepsilon_0)$, then $\kappa(\varepsilon)$, $y_1(\varepsilon) = \varepsilon\psi(\theta(\varepsilon),\kappa(\varepsilon),\varepsilon)$, $y_2(\varepsilon) = 1-2y_1(\varepsilon)$, $x_1(\varepsilon) = 1+y_1(\varepsilon)-\theta(\varepsilon)$, and $x_2(\varepsilon) = 1+ \theta(\varepsilon)$ are all positive, and so the bifurcation takes place at a positive parameter value, and the bifurcating equilibrium is a positive equilibrium of (\ref{eqrev0}).

{\bf Enlargement 6: splitting reactions.} Finally, let us ``split'' the second reaction of $\mathcal{R}_0$ and insert a new intermediate complex involving two new species $\mathsf{Y}_1$ and $\mathsf{Y}_2$ to obtain:
\begin{equation}
\tag{$\mathcal{R}_6$}
\mathsf{X}_1+2\mathsf{X}_2\overset{1}{\longrightarrow} 3\mathsf{X}_2,\quad \mathsf{X}_2 \overset{\kappa}{\longrightarrow} \mathsf{Y}_1+\mathsf{Y}_2 \overset{\varepsilon^{-1}}{\longrightarrow} \mathsf{X}_1\,.
\end{equation}
This gives rise to the ODE
\begin{equation}
\label{eqsplit0}
\left(\begin{array}{c}
\dot x_1 \\
\dot x_2 \\
\dot y_1 \\
\dot y_2
\end{array}
\right) 
=
\left(\begin{array}{rrr}-1&1&-1\\1&-1&0\\0&0&1\\0&0&1\end{array}\right)\left(\begin{array}{c}x_1x_2^2\\\kappa x_2\\\kappa x_2 - \varepsilon^{-1}y_1y_2\end{array}\right)\,.
\end{equation}
We focus attention on the 2D, positive, stoichiometric class of $\mathcal{R}_6$, 
\[
\mathcal{S}' = \{(x_1,x_2,y_1,y_2) \in \mathbb{R}^4_{+}\colon x_1+x_2+y_1 = 2,\, y_2-y_1=1\}\,.
\]
Define $\theta$ by 
\[
\left(\begin{array}{c}x_1+y_1\\x_2\end{array}\right) = \left(\begin{array}{c}1-\theta\\1+\theta\end{array}\right)\,,
\]
and define $w = \varepsilon^{-1}y_1$. For any fixed $\varepsilon > 0$,  $(\theta, w)$ is a local coordinate on $\mathcal{S}'$. We get, in these coordinates,
\begin{equation}
\label{eqsplit}
\begin{array}{rcl}
\dot \theta & = & (1+\theta)(1-\kappa-\theta^2-\varepsilon w(1+\theta))\,,\\
\varepsilon \dot w & = & \kappa (1+\theta) - w - \varepsilon w^2\,,\\
\dot \kappa & = & 0\,.
\end{array}
\end{equation}
For sufficiently small $\varepsilon > 0$, the system has a locally invariant manifold $\mathcal{E}_\varepsilon$ close to 
\[
\mathcal{E}_0 = \{(\theta,w,\kappa)\,\colon\, w=\frac{1}{\kappa(1+\theta)}\}\,.
\] 
In particular, there exist $\varepsilon_1 > 0$, open neighbourhoods $U_\theta \subseteq V_\theta$, $U_\kappa \subseteq \mathbb{R}_+$ with $0 \in U_\theta$, $1 \in U_\kappa$, and a function $\psi\colon U_\theta \times U_\kappa \times (-\varepsilon_1,\varepsilon_1) \to \mathbb{R}$, of any desired finite level of differentiability and satisfying $\psi(\theta,\kappa,0) = \frac{1}{\kappa(1+\theta)}$, and such that $\mathcal{E}_\varepsilon:=\{(\theta,w,\kappa)\colon (\theta,\kappa) \in U_\theta \times U_\kappa, w = \psi(\theta,\kappa,\varepsilon)\}$ is, for $\varepsilon \in (-\varepsilon_1, \varepsilon_1)\backslash\{0\}$, a locally invariant manifold of (\ref{eqsplit}). Restricting attention to $\mathcal{E}_\varepsilon$, we get
\begin{equation}
\label{eqsplit1}
\dot \theta = (1+\theta)(1-\kappa-\theta^2-\varepsilon (1+\theta)\psi(\theta,\kappa,\varepsilon)):=\hat{f}(\theta,\kappa,\varepsilon)\,.
\end{equation}
Observe that $\hat{f}$ is as differentiable as $\psi$, and that $\hat{f}(\theta, \kappa, 0) = f(\theta, \kappa)$. According to the proof of Theorem~\ref{thmbifinherit}, there exists $\varepsilon_0 \in (0, \varepsilon_1]$ such that whenever $\varepsilon \in (0, \varepsilon_0)$, there exists $(\theta(\varepsilon), \kappa(\varepsilon)) \in V_\theta \times V_\kappa$, close to $(0,1)$, and such that, restricting attention to $\mathcal{E}_\varepsilon$, (\ref{eqsplit1}) has a fold bifurcation, unfolded transversely by $\kappa$ at $(\theta(\varepsilon),\kappa(\varepsilon))$. Moreover the eigenvalue of (\ref{eqsplit}) at the bifurcating equilibrium transverse to $\mathcal{E}_\varepsilon$ is real and negative. 

Finally, we observe that, reducing $\varepsilon_0$ if necessary, if $\varepsilon \in (0, \varepsilon_0)$, then $\kappa(\varepsilon)$, $y_1(\varepsilon) = \varepsilon\psi(\theta(\varepsilon),\kappa(\varepsilon),\varepsilon)$, $y_2(\varepsilon) = 1+y_1(\varepsilon)$, $x_1(\varepsilon) = 1-y_1(\varepsilon)-\theta(\varepsilon)$ and $x_2(\varepsilon) = 1+ \theta(\varepsilon)$ are all positive, and so the bifurcation takes place at a positive parameter value, and the bifurcating equilibrium is a positive equilibrium of (\ref{eqsplit0}).

\section{Some examples}
\label{secex}

We present three examples which illustrate the results in this paper. In the first two examples, we present networks which can be proven to display various bifurcations of codimension one or two using Theorem~\ref{thmbifinherit}. In the final example, we illustrate that Theorem~\ref{thmbifinherit} can be applied to networks with degenerate bifurcations, provided these are still unfolded transversely by the rate constants of the network; in this case, however, Theorem~\ref{thmbifinherit} cannot tell us about the degeneracy or otherwise of the bifurcation in the enlarged network.

\subsection{A supercritical Bautin bifurcation}

Consider the following three CRNs:

\begin{center}
\begin{tikzpicture}
\node at (0.25,1) {$\mathcal{R}_{A,0}$};
\draw (-1.5,0.5) rectangle (1.5,-2.75);
    \node[left]   (0) at (0  ,0   ) {$\mathsf{0}$};
    \node[right]  (X) at (0.5,0   ) {$\mathsf{X}$};
    \node[left] (XY) at (0  ,-0.75) {$\mathsf{X}+\mathsf{Y}$};
    \node[right] (2Y) at (0.5,-0.75) {$2\mathsf{Y}$};
    \node[left] (Y) at (0  ,-1.5) {$\mathsf{Y}$};
    \node[right] (2Z) at (0.5,-1.5) {$2\mathsf{Z}$};
    \node[left] (XZ) at (0  ,-2.25) {$\mathsf{X}+\mathsf{Z}$};
    \node[right] (0A) at (0.5,-2.25) {$\mathsf{0}$};
    \draw[->]   (0) to node {} (X);
    \draw[->] (XY) to node {} (2Y);
    \draw[->] (Y) to node {} (2Z);
    \draw[->] (XZ) to node {} (0A);

\begin{scope}[xshift=4.5cm]
\node at (0.25,1) {$\mathcal{R}_{A,1}$};
\draw (-2,0.5) rectangle (2.2,-2.75);
    \node[left]   (0) at (0  ,0   ) {$\mathsf{0}$};
    \node[right]  (X) at (0.5,0   ) {$\mathsf{X}$};
    \node[left] (XY) at (-0.5 ,-0.75) {$\mathsf{X}+\mathsf{Y}$};
    \node[left] (W) at (0.75,-0.75) {$\mathsf{W}$};
    \node[right] (2Y) at (1.25,-0.75) {$2\mathsf{Y}$};
    \node[left] (Y) at (0  ,-1.5) {$\mathsf{Y}$};
    \node[right] (2Z) at (0.5,-1.5) {$2\mathsf{Z}$};
    \node[left] (XZ) at (0  ,-2.25) {$\mathsf{X}+\mathsf{Z}$};
    \node[right] (0A) at (0.5,-2.25) {$\mathsf{0}$};
    \draw[->]   (0) to node {} (X);
    \draw[->] (XY) to node {} (W);
    \draw[->] (W) to node {} (2Y);
    \draw[->] (Y) to node {} (2Z);
    \draw[->] (XZ) to node {} (0A);
\end{scope}

\begin{scope}[xshift=10cm]
\draw (-2.5,0.5) rectangle (2.5,-2.75);
\node at (0.25,1) {$\mathcal{R}_{A,2}$};
    \node[left]   (V) at (0  ,0   ) {$\mathsf{V}$};
    \node[right]  (X) at (0.5,0   ) {$\mathsf{X}$};
    \node[left] (XY) at (-1  ,-0.75) {$\mathsf{X}+\mathsf{Y}$};
    \node[left] (VW) at (1,-0.75) {$\mathsf{V}+\mathsf{W}$};
    \node[right] (2Y) at (1.5,-0.75) {$2\mathsf{Y}$};
    \node[left] (VY) at (0  ,-1.5) {$\mathsf{V}+\mathsf{Y}$};
    \node[right] (2Z) at (0.5,-1.5) {$2\mathsf{Z}$};
    \node[left] (XZ) at (0  ,-2.25) {$\mathsf{X}+\mathsf{Z}$};
    \node[right] (2V) at (0.5,-2.25) {$2\mathsf{V}$};
    \draw[->]   (V) to node {} (X);
    \draw[->] (XY) to node {} (VW);
    \draw[->] (VW) to node {} (2Y);
    \draw[->] (VY) to node {} (2Z);
    \draw[->] (XZ) to node {} (2V);
\end{scope}
\end{tikzpicture}
\end{center}
The largest of the three, $\mathcal{R}_{A,2}$, is a rank $4$, bimolecular CRN on $5$ chemical species $\mathsf{V}, \mathsf{W}, \mathsf{X}, \mathsf{Y}$ and $\mathsf{Z}$. Further, no species figures on both sides of any reaction; and the network is {\em homogeneous}, i.e., the total concentration of the five species is conserved. We claim that $\mathcal{R}_{A,2}$ admits, with mass action kinetics and on some positive stoichiometric class, a supercritical Bautin bifurcation \cite[Section 8.3]{kuznetsov:2023}. Consequently it admits several interesting behaviours including: both sub- and supercritical Andronov--Hopf bifurcations; the coexistence of a stable periodic orbit and a stable equilibrium; and a fold bifurcation of periodic orbits. Confirming these claims without appealing to Theorem~\ref{thmbifinherit} would be challenging.

To see why $\mathcal{R}_{A,2}$ admits a supercritical Bautin bifurcation, we observe, first, that it was shown in \cite{banajiborosnonlinearity} that $\mathcal{R}_{A,0}$, with mass action kinetics, admits a supercritical Bautin bifurcation, unfolded transversely by its rate constants. In fact, network $\mathcal{R}_{A,0}$ is a minimal example of a bimolecular mass action network admitting a Bautin bifurcation. 

Network $\mathcal{R}_{A,2}$ is built from $\mathcal{R}_{A,0}$ via two successive enlargments. First, we introduce a new intermediate species $\mathsf{W}$ into the second reaction of $\mathcal{R}_{A,0}$ to obtain $\mathcal{R}_{A,1}$; the reader may easily confirm that this is an instance of enlargement E6 above. Second, we introduce into some of the reactions of $\mathcal{R}_{A,1}$ a new species $\mathsf{V}$, to obtain $\mathcal{R}_{A,2}$. This has been done without altering the rank of the network, and the addition of $\mathsf{V}$ is thus an instance of enlargement E3 above. 

By Theorem~\ref{thmbifinherit}, we immediately have that $\mathcal{R}_{A,2}$ admits a supercritical Bautin bifurcation on some positive stoichiometric class, and this bifurcation is unfolded transversely by the rate constants. Moreover, the proofs are constructive: if we know the rate constants at which the bifurcation occurs in $\mathcal{R}_{A,0}$, then we know how to choose a positive stoichiometric class and rate constants at which $\mathcal{R}_{A,2}$ admits this bifurcation.

\subsection{The homogenised Brusselator: a multitude of bifurcations}

Consider the following three CRNs. 
\begin{center}
\begin{tikzpicture}
\node at (0.25,1) {$\mathcal{R}_{B,0}$};
\draw (-1.75,0.5) rectangle (1.5,-1.25);
    \node[left]   (X) at (0  ,0   ) {$\mathsf{X}$};
    \node[right]  (Y) at (0.5,0   ) {$\mathsf{Y}$};
    \node[left] (2XY) at (0  ,-0.75) {$2\mathsf{X}+\mathsf{Y}$};
    \node[right] (3X) at (0.5,-0.75) {$3\mathsf{X}$};
    \draw[->]   (X) to node {} (Y);
    \draw[->] (2XY) to node {} (3X);

\begin{scope}[xshift=5cm]
\node at (0.25,1) {$\mathcal{R}_{B,1}$};
\draw (-1.75,0.5) rectangle (1.5,-1.25);
    \node[left] (0) at (-1,0) {$\mathsf{0}$};
    \node[left] (X) at (0,0) {$\mathsf{X}$};
    \node[right] (Y) at (0.5,0) {$\mathsf{Y}$};
    \node[left] (2XY) at (0,-0.75) {$2\mathsf{X}+\mathsf{Y}$};
    \node[right] (3X) at (0.5,-0.75) {$3\mathsf{X}$};
    \draw[->,transform canvas={yshift=2pt}]  (0) to node {} (X);
    \draw[->,transform canvas={yshift=-2pt}] (X) to node {} (0);
    \draw[->]   (X) to node {} (Y);
    \draw[->] (2XY) to node {} (3X);
\end{scope}

\begin{scope}[xshift=10cm]
\node at (0.25,1) {$\mathcal{R}_{B,2}$};
\draw (-1.75,0.5) rectangle (1.5,-1.25);
    \node[left] (Z) at (-1,0) {$\mathsf{Z}$};
    \node[left] (X) at (0,0) {$\mathsf{X}$};
    \node[right] (Y) at (0.5,0) {$\mathsf{Y}$};
    \node[left] (2XY) at (0,-0.75) {$2\mathsf{X}+\mathsf{Y}$};
    \node[right] (3X) at (0.5,-0.75) {$3\mathsf{X}$};
    \draw[->,transform canvas={yshift=2pt}]  (Z) to node {} (X);
    \draw[->,transform canvas={yshift=-2pt}] (X) to node {} (Z);
    \draw[->]   (X) to node {} (Y);
    \draw[->] (2XY) to node {} (3X);
\end{scope}
\end{tikzpicture}
\end{center}

As discussed in Section~\ref{secsimplefold}, the network $\mathcal{R}_{B,0}$ with mass action kinetics admits a nondegenerate fold bifurcation. On the other hand, it is well-known and easily checked that the \emph{Brusselator}, network $\mathcal{R}_{B,1}$, has a unique positive equilibrium for each choice of mass action rate constants; and this equilibrium undergoes a supercritical Andronov--Hopf bifurcation as the rate constants are varied. Notice that $\mathcal{R}_{B,1}$ cannot be obtained from $\mathcal{R}_{B,0}$ by applying a sequence of enlargements E1--E6. However, the \emph{homogenised Brusselator}, $\mathcal{R}_{B,2}$, introduced in \cite[Section 4.4]{banajiboroshofbauer}, is obtained from $\mathcal{R}_{B,0}$ by applying enlargement E5, and, at the same time, can also be seen as a result of applying enlargement E3 to $\mathcal{R}_{B,1}$. 

Hence, by Theorem~\ref{thmbifinherit}, $\mathcal{R}_{B,2}$ admits both a nondegenerate fold bifurcation, inherited from $\mathcal{R}_{B,0}$, and a supercritical Andronov--Hopf bifurcation, inherited from $\mathcal{R}_{B,1}$. In fact, as shown in \cite[Appendix A]{banajiboroshofbauer}, the homogenised Brusselator $\mathcal{R}_{B,2}$ even admits a supercritical Bogdanov--Takens bifurcation. This codimension-two bifurcation exhibits a fold bifurcation, an Andronov--Hopf bifurcation, and even a homoclinic bifurcation near the bifurcation point (see \cite[Section 8.4]{kuznetsov:2023}). Further, it was shown in \cite[Section 4.4]{banajiboroshofbauer} that $\mathcal{R}_{B,2}$ admits another codimension-two bifurcation: Bautin bifurcation (with a positive second Lyapunov coefficient). Hence, an attracting and a repelling limit cycle coexist on the parameter-dependent centre manifold for some choices of rate constants, and a fold bifurcation of limit cycles is also admitted. As a consequence, by Theorem~\ref{thmbifinherit}, any network that can be obtained by the application of a finite sequence of enlargements E1--E6 to $\mathcal{R}_{B,2}$ admits all the behaviours listed above. For example, the network
\begin{align}
\tag{$\mathcal{R}_{B,3}$}
\begin{aligned}
\begin{tikzpicture}[scale=2]
    \node (X) at (0,0) {$\mathsf{X}$};
    \node (Y) at (1,0) {$\mathsf{Y}$};
    \node (Z) at (0.5,0.87) {$\mathsf{Z}$};
    \node (0) at (0.5,0.29) {$\mathsf{0}$};
    \node[left] (2XY) at (1.8,0.435) {$2\mathsf{X}+\mathsf{Y}$};
    \node[right] (3X) at (2.2,0.435) {$3\mathsf{X}$};
    \draw[->,transform canvas={yshift=1.6pt}]                (X) to node {} (Y);
    \draw[->,transform canvas={yshift=-1.6pt}]               (Y) to node {} (X);
    \draw[->,transform canvas={xshift=-1.2pt,yshift=1.2pt}]  (X) to node {} (Z);
    \draw[->,transform canvas={xshift=1.2pt,yshift=-1.2pt}]  (Z) to node {} (X);
    \draw[->,transform canvas={xshift=-1.2pt,yshift=-1.2pt}] (Y) to node {} (Z);
    \draw[->,transform canvas={xshift=1.2pt,yshift=1.2pt}]   (Z) to node {} (Y);
    \draw[->,transform canvas={xshift=-1.6pt}]               (0) to node {} (Z);
    \draw[->,transform canvas={xshift=1.6pt}]                (Z) to node {} (0);
    \draw[->,transform canvas={xshift=1.2pt,yshift=-1.2pt}]  (0) to node {} (X);
    \draw[->,transform canvas={xshift=-1.2pt,yshift=1.2pt}]  (X) to node {} (0);
    \draw[->,transform canvas={xshift=1.2pt,yshift=1.2pt}]   (0) to node {} (Y);
    \draw[->,transform canvas={xshift=-1.2pt,yshift=-1.2pt}] (Y) to node {} (0);
    \draw[->] (2XY) to node {} (3X);
\end{tikzpicture}
\end{aligned}
\end{align}
which is the result of applying enlargements E1 and E2 to $\mathcal{R}_{B,2}$, admits the following bifurcations: fold (of equilibria), Andronov--Hopf, Bogdanov--Takens, homoclinic, Bautin, and fold (of limit cycles). Given the high number of parameters (i.e., rate constants) and that the network has rank three, confirming the occurrence of these bifurcations by direct calculations would be challenging.

As a final remark on this example, we note that as $\mathcal{R}_{B,3}$ is fully open, Corollary~\ref{corfully} allows us to conclude that any fully open mass action CRN which includes $\mathcal{R}_{B,3}$ as an induced subnetwork admits all of the above bifurcations too.

\subsection{Lifting a degenerate Andronov--Hopf bifurcation}
\label{secliftedvertical}

In this example we show that we may apply Theorem~\ref{thmbifinherit} to systems which display degenerate bifurcations. However, we should not expect the bifurcation in the enlarged network to be degenerate. We consider the following pair of mass action systems

\begin{center}
\begin{tikzpicture}
\node at (0.25,1) {$\mathcal{R}_{C,0}$};
\draw (-1.75,0.5) rectangle (2.5,-2);

    \node[left]  (ZX) at (0  , 0  ) {$\mathsf{Z+X}$};
    \node[right] (2X) at (0.5, 0  ) {$2\mathsf{X}$};
    \node[left]  (XY) at (0  ,-0.75) {$\mathsf{X+Y}$};
    \node[right] (2Y) at (0.5,-0.75) {$2\mathsf{Y}$};
    \node[left]  (YZ) at (0  ,-1.5  ) {$\mathsf{Y+Z}$};
    \node[right]  (0) at (0.5,-1.5  ) {$\mathsf{0}$};
    \node[right] (2Z) at (1.5,-1.5  ) {$2\mathsf{Z}$};
    \draw[->]   (ZX) to node[above] {$\kappa_1$} (2X);
    \draw[->]   (XY) to node[above] {$\kappa_2$} (2Y);
    \draw[->]   (YZ) to node[above] {$\kappa_3$}  (0);
    \draw[->]    (0) to node[above] {$\kappa_4$} (2Z);

\begin{scope}[xshift=6cm]
\node at (0.25,1) {$\mathcal{R}_{C,1}$};
\draw (-1.75,0.5) rectangle (2.75,-2);

    \node[left]  (ZX) at (0  , 0  ) {$\mathsf{Z+X}$};
    \node[right] (2X) at (0.5, 0  ) {$2\mathsf{X}$};
    \node[left]  (XY) at (0  ,-0.75) {$\mathsf{X+Y}$};
    \node[right] (2Y) at (0.5,-0.75) {$2\mathsf{Y}$};
    \node[left]  (YZ) at (0  ,-1.5  ) {$\mathsf{Y+Z}$};
    \node[right] (2W) at (0.5,-1.5  ) {$2\mathsf{W}$};
    \node[right] (2Z) at (1.75,-1.5  ) {$2\mathsf{Z}$};
    \draw[->]   (ZX) to node[above] {$\kappa_1$} (2X);
    \draw[->]   (XY) to node[above] {$\kappa_2$} (2Y);
    \draw[->]   (YZ) to node[above] {$\kappa_3$} (2W);
    \draw[->]    (2W) to node[above] {$\kappa_4$} (2Z);
\end{scope}
\end{tikzpicture}

\end{center}

The system $\mathcal{R}_{C,0}$ has a unique positive equilibrium that has a pair of purely imaginary eigenvalues when $\kappa_1 = \kappa_2+\kappa_3$. It was proven in \cite{bbh2023a} that when $\kappa_1 = \kappa_2+\kappa_3$ the system undergoes a degenerate Andronov--Hopf bifurcation. In fact, the Andronov--Hopf bifurcation is {\em vertical}, i.e., all periodic orbits appear simultaneously at $\kappa_1 = \kappa_2+\kappa_3$. Moreover, this degenerate bifurcation is unfolded transversely by the rate constants. Homogenisation of $\mathcal{R}_{C,0}$, corresponding to an instance of enlargement E3, leads to the system $\mathcal{R}_{C,1}$ which, due to Theorem~\ref{thmbifinherit}, must admit an Andronov--Hopf bifurcation. The positive stoichiometric classes of $\mathcal{R}_{C,1}$ are given by $x+y+z+w=c$ for $c>0$. Since the r.h.s.\ of the associated mass action ODE system is homogeneous (of degree two), the phase portrait does not depend on $c$. Direct computation shows that in each positive stoichiometric class there is a unique positive equilibrium. We cannot {\em a priori} decide on the basis of Theorem~\ref{thmbifinherit} whether the Andronov--Hopf bifurcation at that equilibrium is supercritical, subcritical or degenerate. 

In fact, one can confirm by direct computation that the \emph{first} Lyapunov coefficient in $\mathcal{R}_{C,1}$ can be negative, positive or zero, and thus, supercritical, subcritical and degenerate Andronov--Hopf bifurcations are {\em all} admitted. After fixing one of the rate constants (which does not restrict generality), the first Lyapunov coefficient vanishes along a curve in the three-dimensional parameter space. Moreover, the \emph{second} Lyapunov coefficient changes sign along this curve. Hence, both supercritical and subcritical Bautin bifurcations are admitted, and there is a single point in parameter space, where both the first and the second Lyapunov coefficients vanish. To understand the behaviour at and near that point in parameter space, one would need to compute the \emph{third} Lyapunov coefficient, a task we defer for future work. The computations of the first and the second Lyapunov coefficients are available in the Mathematica file at \cite{balazsgithub}.

\section{Discussion and conclusions}
\label{secconc}

When studying network dynamical systems, it is very natural to ask how properties of subnetworks affect those of the network. In the case of chemical reaction networks, there are a growing number of results, such as those in this paper, telling us how networks inherit dynamics from their subnetworks. Highlighting the need for such results is the fact that intuition is not always a good guide to which enlargements of a reaction network might preserve interesting behaviours, as illustrated by examples in papers including \cite{banajipanteaMPNE,banajiCRNosci,GrossJMB,Gutierrez2023}.

The practical importance of inheritance results is that direct computations on medium to large networks, such as those arising from biology, can be very challenging. Consequently, we need results which allow us to infer interesting behaviours in biologically relevant models from the behaviours occurring in simpler submodels. The results here on the inheritance of bifurcations, coupled with analytical and algorithmic work on bifurcations in small networks such as in \cite{banajiborosnonlinearity,banajiboroshofbauer1}, open up the possibility of identifying biologically relevant networks which display exotic behaviours in some parameter regions, without resorting to computations. 

A natural extension to this work would be to consider the inheritance of bifurcations in reaction networks with kinetics other than mass action. Indeed, some extensions in this direction are already evident in the proof of the main theorem. It is also likely that further enlargements exist which preserve bifurcations: see, for example, the generalisation of enlargement E2 proven in \cite{Cappelletti2020} to preserve nondegenerate multistationarity.

Another natural extension to this work involves going beyond local bifurcations of equilibria. We have remarked (see Remark~\ref{remgeneralise}) that the general set-up described here naturally implies such extensions. For example, we expect that results on the inheritance of bifurcations of periodic orbits, similar to those for equilibria, will hold. Writing down such results remains a task for future work.

\bibliographystyle{unsrt}

\bibliographystyle{unsrt}

\end{document}